\newcommand{\lsf }{\phi}
\newcommand{\Om }{\Omega}
\newcommand{\divv}{\operatorname{div}}
\newcommand{\transp}{\mathsf{T}}
\newcommand{\Rd}{\R^d}
\newcommand{\tz}{t_0}
\newcommand{\po}{\partial\Om}
\newcommand{\dJ}{dJ}
\newcommand{\R }{\mathbb{R}}
\newcommand{\bfom}{\mathbf{\Om}}
\newcommand{\Sb}{\mathbf{S}}
\newcommand{\bx}{\bm{x}}
\newcommand{\bn}{\bm{n}}
\newcommand{\blsf}{\bm{\phi}}
\newcommand{\bta}{\bm{\theta}}
\newcommand{\bxi}{\bm{\xi}}
\newcommand{\Tt}{\bm{\Phi}_t}
\newcommand{\wblsf}{\bm{\psi}}
\newcommand{\wlsf}{\psi}
\newcommand{\I}{\mathcal{I}}
\newcommand{\K}{\mathcal{K}}
\newcommand{\mJ}{\mathcal{J}}
\newcommand{\hold}{\mathcal{D}}
\newcommand{\C}{\mathcal{C}}
\newcommand{\mM}{\mathcal{M}}
\newcommand{\mE}{\mathcal{E}}
\newcommand{\inte}{\operatorname{int}}
\newcommand{\pd}{\mathds{P}_\K(\hold)}
\newcommand{\ua}{u}
\newcommand{\ub}{v}
\newcommand{\pa}{p}
\newcommand{\pb}{q}
\newcommand{\xa}{\xi}
\newcommand{\xb}{\zeta}
\newcommand{\ya}{\mu}
\newcommand{\yb}{\eta}
\newcommand{\card}{{|\I|}}
\newcommand{\ck}{{\kappa}}
\newcommand{\test}{w}
\newcommand{\meas}{h}
\newcommand{\ke}{r}
\newtheorem{definition}{Definition}
\newtheorem{thm}{Theorem}
\newtheorem{lemma}{Lemma}
\newtheorem{proposition}{Proposition}
\newtheorem{assump}{Assumption}
\theoremstyle{remark}
\newtheorem{remark}{Remark}
\theoremstyle{definition}
\newtheorem{ex}{Example}
\title{Analysis and application of a lower envelope method for sharp-interface multiphase problems}
\author{Antoine Laurain}
\address{Instituto de Matem\'atica e Estat\'istica,
Universidade de S\~{a}o Paulo,
Rua do Mat\~{a}o, 1010,
CEP 05508-090 - S\~{a}o Paulo - SP (laurain@ime.usp.br)}
\date{\today}
\begin{document}
\begin{abstract}
We introduce and analyze a lower envelope method (LEM) for the tracking of interfaces motion in  multiphase problems.
The main idea of the method is to define the phases as the regions where the lower envelope of a set of functions coincides with exactly one of the  functions.
We show that a variety of complex lower-dimensional interfaces naturally appear in the process.
The phases evolution is then achieved by solving a set of transport equations.
In the first part of the paper, we show several theoretical properties, give conditions to obtain a well-posed behaviour, and show that the level set method is a particular case of the LEM.
In the second part, we propose a LEM-based numerical algorithm for multiphase shape optimization problems. 
We apply this algorithm to an inverse conductivity problem with three phases and present several numerical results.
\end{abstract}


\maketitle

\begin{center}
\noindent {\bf Mathematics Subject Classification.} 49Q10, 35Q93, 35R30, 35R05 
\end{center}

\section{Introduction}

The accurate modeling of multiple phases presenting sharp interfaces is highly relevant for physical phenomena and industrial processes.
Examples of such problems are the optimization of the distribution of several materials in order to minimize certain costs and mechanical criteria in  structural optimization, the monitoring of multiphase fluid flow in oil recovery system, the monitoring of sedimentation processes, multiphase inverse problems and dry foams.
On one hand,  the level set method (LSM) \cite{MR965860,MR1700751} introduced by Osher and Sethian has become a staple of sharp-interface modelling for two phases; see the recent review \cite{Gibou2018}.
On the other hand, the case of three or more interfaces presents additional challenges and is an active field of research.

A variety of level set-based methods have been proposed to handle multiphase algorithms in the literature.
The color level set method (CLSM) has been introduced in \cite{Vese2002} for image segmentation, see also  \cite{HLsegmentation,MR2126423,MR2033962}.
In this framework one can represent up to $2^n$ phases using $n=n_1+n_2$ functions, defining the phases as the sets where $n_1$ functions are positive, and $n_2$ functions are negative.
The multi-material level set-based method (MMLS), introduced in \cite{MR3283822}, uses $n$ level set functions to represent $n+1$ phases. 
The principle of the MMLS is similar to  the CLSM, but the phases are defined using different combinations of functions. 
The reconciled level set method (RLSM), also known as the coupled level set method \cite{MR2672120,MR3577907,zhang_chen_osher_2008}, has been introduced in \cite{MR1277282}
and is based on the diffusion of characteristic functions of each region.
We also mention the piecewise constant level set (PCLS) method \cite{MR2287609}, a projection method \cite{MR1914624},
and a smoothed interface approach using a signed distance function to enforce a fixed width of the transition layer in  \cite{MR3264217}.

These methods involve using multiple level set functions and occasionally some additional procedures such as projections to avoid the appearance of vacuums or overlaps. 
We observe that  level set-based methods have a fundamental limitation when it comes to capturing the motion of triple points and multiple junctions using smooth functions, which originates from the fact that the level sets of a smooth function are in most cases smooth, and consequently the nonsmoothness of the phases at a multiple junction must come from another mechanism.
In two dimensions for instance, the triple points appearing in the methods mentioned above usually have one angle equal to $\pi$ due to the smoothness of one of the phases at this junction point.
This observation suggests to explore other paradigms than level set approaches to track the motion of multiple junctions and phases.

Another issue is that many of these approaches involve a small diffuse interface, or a regularization parameter to smooth the level set functions. 
These regularization procedures introduce arbitrary parameters in the problem which need to be chosen ad hoc, may be unphysical, or need an asymptotic procedure to recover the sharp-interface configuration.

Among level set-based methods, the Voronoi Implicit Interface Method (VIIM)  \cite{MR2881484,Saye10052013} is an exception as  it  is able to capture the motion of multiple junctions and complex interfaces using only one function for an entire multiphase system. 
However, it also involves taking the limit of $\epsilon$-smoothed solutions as $\epsilon\to 0$  which makes its analysis challenging; see \cite{Laurain2017}.
Other methods not based on level sets include volume of fluid methods \cite{Noh}, front tracking methods \cite{MR1345029}, variational methods \cite{MR1408069},  SIMP \cite{MR3613464}, an alternating active-phase algorithm \cite{MR3183397}, and phase field models \cite{MR1740846}, where a diffusive layer with positive thickness models the interface.
The study of the sharp interface limit when  the thickness of the diffusive layer tends to zero is  an active field of research in the phase-field community;
see \cite{MR2375577,MR1632810}. 

In this paper we introduce a lower envelope method (LEM) for tracking the motion of interfaces in multiphase problems.
The LEM belongs to the class of implicit interface methods, but not to the class of level set methods, except for the particular case of two phases where it coincides with the LSM.
Regarding the issues discussed above, the LEM has the following advantages.
It does not involve any  regularization parameter or small diffuse interface, so the interfaces stay sharp at all times.
By construction, it precludes the appearance of vacuum and overlaps, and  naturally produces triple points
and other nonsmooth interfaces using smooth functions.
In particular, in two dimensions we can show that  the triple points have angles between $0$ and $\pi$ which can be explicitly computed using the functions involved in the method.  
Since no regularization or asymptotic procedure is required in the LEM, the analysis of the motion of multiple junctions and complex interfaces becomes much more tractable.

We give now a brief overview of the main ideas of the LEM.
Given a collection $\blsf$ of functions $\lsf_k$ in $\C^\infty(\R^d,\R)$, $k=0,\dots,d$, their lower envelope $E_{\blsf}$ is the supremum of the functions whose graph remains below  the union of the graphs of the functions in $\blsf$.
By construction,  $E_{\blsf}$ always coincides with one or more functions $\lsf_k$ at a point $x$.
On one hand, $E_{\blsf}$ is locally smooth at points where it coincides with exactly one function $\lsf_k$.
On the other hand, under certain natural conditions on $\blsf$ that will be discussed in details,  $E_{\blsf}$ is not smooth at points where it coincides with two or more functions $\lsf_k$, and the regions where  $E_{\blsf}$ coincides with exactly $d_0\leq d+1$ functions $\lsf_k$ are sets of dimensions $d- (d_0 - 1)$. 
The main idea of the LEM  is to exploit this key property of  $E_{\blsf}$ by defining  the phases as the sets where  $E_{\blsf}$ coincides with one of the functions $\lsf_k$. 
This naturally models a multiphase configuration with nonsmooth phases and a variety of lower-dimensional interfaces and multiple junctions such as triple points  in two dimensions, quadruple points and triple lines in three dimensions.
We show that the motion of these phases can be described by solving a set of transport equations,  generalizing the main idea of the level set method for nonsmooth domains described in \cite{MR3535238}.

In this paper we describe the LEM in the framework of multiphase optimization problems involving PDEs, considered as shape optimization problems \cite{MR2731611,SokZol92}.
In shape optimization, the derivative of the cost functional can be written in a weak form often called distributed shape derivative, which is a volume integral when the cost function is itself a volume integral, or in a strong form called boundary expression or Hadamard formula.
Boundary expressions are often computed for  domains which are at least $\C^1$, even though they can sometimes be computed for Lipschitz or polygonal domains, but this requires a careful analysis of the regularity of the solutions of the underlying PDEs; see \cite{laurain_2ndorder_shape_2019}. 
Distributed shape derivatives on the other hand are usually valid for domains with lower regularity such as curvilinear polygons, Lipschitz domains or even open sets.
Since the sets involved in multiphase optimization problems with at least three phases are usually curvilinear polygons, distributed shape derivatives are a key ingredient of the LEM.
Other advantages of shape derivatives in distributed form  are the higher accuracy  for numerical approximation; see  \cite{MR800331,Hiptmair2014}, and the fact that shape derivatives written in strong form are sometimes impractical for numerical purposes, as they may involve the  computation of jumps across interfaces; see the related discussions in \cite{MR3264217,MR3535238}.

In order to show the feasibility and efficiency of the LEM, we present an application to the inverse problem of electrical impedance tomography (EIT) with three phases.
In real-life problems, many applications of  EIT involve multiple phases and sharp interfaces.
The incorporation of  prior information about sharp interfaces  explicitly in the modeling of the problem is especially advantageous for inverse problems as they are characterized by incomplete data; see \cite{LIU2015108}.
Sharp-interface models for EIT with two phases have been studied in \cite{MR3620139, MR3723652,MR2536481,MR2886190,MR3535238,MR2052729}, but there are fewer references for three phases or more, we mention \cite{8425727} for a parametric level set method, and \cite{LIU2015108} for multi-phase flow monitoring.
In this paper we compute the distributed shape derivative for a general multiphase anisotropic EIT problem with piecewise smooth conductivity.
For the numerical experiments we consider the particular case of three phases and isotropic conductivity.

The paper is organized as follows.
In Section \ref{sec:intro} we define the lower envelope and the phases, study the properties of the phases distribution and give several examples.
In particular, we give a natural condition on the functions so that the phases distribution defines a partition of the domain without overlapping, which is a crucial property for the proper functioning of the algorithm. 
In Section \ref{sec:flow}, we define and discuss properties of weak and strong forms of shape derivatives in the multiphase setting.
In Section \ref{sec:HJ}, we demonstrate how the motion of phases, interfaces and multiple junctions can be tracked using transport equations, discuss the possibility of reducing the dimension of perturbation fields, introduce the LEM, and  show that the LSM \cite{MR965860} is a particular case of the LEM.
In Section \ref{sec:geo_lem} we study geometric properties of the LEM, in particular we compute the angles at a triple junction in two dimensions, and we verify that  multiple junctions evolve with the expected velocity.
In Section \ref{sec:EIT} we apply the LEM to a multiphase EIT problem and present several numerical experiments.

\section{Multiphase setting using a lower envelope function}\label{sec:intro}
In this section we introduce the multiphase setting based on a lower envelope approach.
The main task is to study the geometric properties of the phases and to give conditions on the lower envelope functions in order to avoid phases overlaps and obtain a partition of the domain.

Let $d\geq 2$ and $\hold\subset\R^d$ be open and bounded. 
Define the set of indices 
$$\K :=\{0,1,\dots ,\ck -1\}\subset\mathds{N},$$ 
where $\ck$ is the cardinal of $\K$, and  $\mathds{I}_k^r : = \{\I\subset\K\ |\ \card =r \text{ and } \I\ni k\}$. 
Let $\blsf = (\lsf_0,\lsf_1, \dots ,\lsf_{\ck -1})\in \C^\infty(\R^d,\R^{\ck})$.
\begin{definition}\label{def1}
The function
\begin{equation}\label{Edef}
E_{\blsf}(x) := \min_{k \in\K} \lsf_k(x)
\end{equation}
is called {\it lower envelope} of $\blsf$. 
We define the open sets
\begin{equation}
\label{def:omk} \Om_k(\blsf) := \inte\{x\in\hold\ |\ \lsf_k(x) = E_{\blsf}(x) \},\quad \mbox{ for } k\in\K, 
\end{equation}
or equivalently
\begin{equation}\label{def:omk:alt}
 \Om_k(\blsf) := \inte\left\{x\in\hold\ |\ \lsf_k(x) \leq \lsf_\ell(x),\ \forall \ell \in\K\setminus\{k\} \right\},\ \mbox{ for } k \in\K. 
\end{equation}
The sets $\Om_k(\blsf)$ are called ``phases''.
We denote by $\bfom(\blsf): = (\Om_0(\blsf),\dots,\Om_{\ck -1}(\blsf))$ the vector of phases $ \Om_k(\blsf)$.
\end{definition}
The following lemma describes several important properties of the phases $\Om_k(\blsf)$.
\begin{lemma}\label{lemma001}
For all $k \in\K$ we have  
\begin{equation}\label{eq1}
\left\{x\in\hold\ |\ \lsf_k(x) <\lsf_\ell(x),\  \forall \ell \in\K\setminus\{k\} \right\}\subset\Om_k(\blsf).
\end{equation}  
Moreover, for all $k \in\K$ we have 
\begin{equation}\label{eq2}
 \overline{\Om_k(\blsf)} = \left\{x\in\overline{\hold}\ |\ \lsf_k(x) \leq\lsf_\ell(x),\  \forall \ell \in\K\setminus\{k\} \right\} 
\end{equation}
and 
\begin{equation}\label{lemma4b}
\bigcup_{k\in\K} \overline{\Om_{k}(\blsf)} = \overline{\hold}. 
\end{equation}
\end{lemma}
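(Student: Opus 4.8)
The plan is to treat the three assertions in turn, proving \eqref{eq2} first and then reading off \eqref{lemma4b} from it (though I will also point out a self-contained route to the latter). Throughout, set $A_k := \{x\in\hold \mid \lsf_k(x)\le\lsf_\ell(x)\ \forall \ell\in\K\setminus\{k\}\}$, so that $\Om_k(\blsf)=\inte A_k$ by \eqref{def:omk:alt}; write $S_k$ for the open set on the left of \eqref{eq1} and $F_k$ for the set on the right of \eqref{eq2}.

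Assertion \eqref{eq1} is immediate from continuity: each difference $\lsf_k-\lsf_\ell$ is continuous, so $S_k=\hold\cap\bigcap_{\ell\ne k}\{\lsf_k<\lsf_\ell\}$ is open; since $S_k\subset A_k$ and $\Om_k(\blsf)=\inte A_k$ is the largest open subset of $A_k$, we conclude $S_k\subset\Om_k(\blsf)$. For the easy inclusion in \eqref{eq2}, note that $F_k=\overline{\hold}\cap\bigcap_{\ell\ne k}\{\lsf_k\le\lsf_\ell\}$ is closed, being a finite intersection of closed sets; since $\Om_k(\blsf)=\inte A_k\subset A_k\subset F_k$, passing to closures gives $\overline{\Om_k(\blsf)}\subset F_k$.

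The substance of \eqref{eq2} is the reverse inclusion $F_k\subset\overline{\Om_k(\blsf)}$, which I expect to be the main obstacle. The natural strategy is to prove that $\Om_k(\blsf)$ is dense in $F_k$, and, thanks to \eqref{eq1}, it suffices to approximate an arbitrary $x\in F_k$ by points of $S_k$, i.e.\ by points at which $\lsf_k$ is the \emph{strict} minimizer. This is trivial when $\lsf_k(x)<\lsf_\ell(x)$ for all $\ell\ne k$, so the real difficulty concentrates at a coincidence point, where $\lsf_k(x)=\lsf_\ell(x)$ for one or more competitors $\ell$. There one must produce nearby points at which $\lsf_k$ dips strictly below \emph{all} active competitors simultaneously, continuity already keeping the inactive ones (those with $\lsf_m(x)>\lsf_k(x)$) strictly larger on a small ball. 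Producing such a direction is exactly the step that cannot succeed for arbitrary smooth data: if $\lsf_k$ touches a competitor tangentially from below, or coincides with it on a set of empty interior, then $\Om_k(\blsf)$ can be empty near $x$ although $x\in F_k$ (for two functions with $\lsf_1\equiv 0$ and $\lsf_0\ge 0$ vanishing only on a hyperplane one gets $\Om_0(\blsf)=\emptyset$ while $F_0$ is that hyperplane). I would therefore carry out the approximation under the non-degeneracy conditions on $\blsf$ announced in the introduction, which make each active coincidence set $\{\lsf_k=\lsf_\ell\}$ a hypersurface across which $\lsf_k-\lsf_\ell$ changes sign and, crucially, guarantee that the active competitors share a common descent direction for $\lsf_k$; moving a short distance from $x$ along such a direction lands in $S_k$, giving the density and hence the inclusion.

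Finally, \eqref{lemma4b} follows directly from \eqref{eq2}: the inclusion $\bigcup_k\overline{\Om_k(\blsf)}\subset\overline{\hold}$ is clear since $\Om_k(\blsf)\subset\hold$, while for the converse any $x\in\overline{\hold}$ has a minimizing index $k^\ast$ with $\lsf_{k^\ast}(x)\le\lsf_\ell(x)$ for all $\ell$, so $x\in F_{k^\ast}=\overline{\Om_{k^\ast}(\blsf)}$. It is worth noting that \eqref{lemma4b} also admits a proof independent of the delicate inclusion above: the closed sets $A_k$, $k\in\K$, cover $\hold$, so by the Baire category theorem every open ball in $\hold$ meets some $\inte A_k=\Om_k(\blsf)$; hence $\bigcup_k\Om_k(\blsf)$ is dense in $\hold$ and, the union being finite, $\bigcup_k\overline{\Om_k(\blsf)}=\overline{\bigcup_k\Om_k(\blsf)}=\overline{\hold}$. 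This robust route is a useful consistency check, as \eqref{lemma4b} persists even in the degenerate situations where the per-index equality \eqref{eq2} is most delicate.
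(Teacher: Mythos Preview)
Your treatment of \eqref{eq1}, of the inclusion $\overline{\Om_k(\blsf)}\subset F_k$, and of \eqref{lemma4b} matches the paper's argument essentially line for line. For the reverse inclusion $F_k\subset\overline{\Om_k(\blsf)}$, however, the paper takes exactly the shortcut you worry about: it simply asserts that ``taking the closure of both sets in \eqref{eq1}'' yields $F_k\subset\overline{\Om_k(\blsf)}$, i.e.\ it tacitly uses $\overline{S_k}=F_k$ without justification and without invoking any non-degeneracy hypothesis. Your counterexample (two phases, $\lsf_1\equiv 0$, $\lsf_0\ge 0$ vanishing only on a hyperplane, so that $\Om_0(\blsf)=\emptyset$ while $F_0$ is the hyperplane) shows that this closure identity, and hence \eqref{eq2} itself, can fail for smooth $\blsf$. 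So you have in fact located a genuine gap in the paper's proof of \eqref{eq2} as stated; your proposed repair---imposing the transversality conditions on $\widehat\blsf_\I$ that the paper introduces only in the subsequent lemmas---is the natural fix, but the paper does not invoke it here.

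Your Baire-category derivation of \eqref{lemma4b} is not in the paper; it is a genuinely independent argument that survives the degeneracies under which \eqref{eq2} breaks down, and is therefore worth retaining as the robust route to the covering property.
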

\begin{proof}
The set $\left\{x\in\hold\ |\ \lsf_k(x) <\lsf_\ell(x),\  \forall \ell \in\K\setminus\{k\} \right\}$ is open since it is the preimage of an open set  under the vector-valued continuous function $$(\lsf_k - \lsf_0, \lsf_k - \lsf_1,\dots ,\lsf_k - \lsf_{k-1},\lsf_k - \lsf_{k+1},\dots,\lsf_k - \lsf_{\ck -1} )\in \C^\infty(\R^d,\R^{\ck -1}).$$
Since we clearly have the inclusion
$$\left\{x\in\hold\ |\ \lsf_k(x) <\lsf_l(x),\  \forall \ell \in\K\setminus\{k\} \right\}
\subset
\left\{x\in\hold\ |\ \lsf_k(x) \leq \lsf_l(x),\  \forall \ell \in\K\setminus\{k\} \right\},$$
and $\Om_k(\blsf)$ is by definition the largest open set included in $\left\{x\in\hold\ |\ \lsf_k(x) \leq \lsf_\ell(x),\, \forall \ell \in\K\setminus\{k\} \right\}$, \eqref{eq1} follows.

Now, taking the closure of both sets in \eqref{eq1}, we obtain
\begin{equation}\label{eq:5}
\left\{x\in\overline{\hold}\ |\ \lsf_k(x) \leq\lsf_\ell(x),\  \forall \ell \in\K\setminus\{k\} \right\} \subset \overline{\Om_k(\blsf)}. 
\end{equation}
Considering definition \eqref{def:omk:alt}, we also have
\begin{equation}\label{eq:6}
 \Om_k(\blsf) \subset \left\{x\in \hold\ |\ \lsf_k(x) \leq \lsf_\ell(x),\  \forall \ell \in\K\setminus\{k\} \right\}.
\end{equation}
Taking the closure of both sets in \eqref{eq:6} we obtain 
\begin{equation}\label{eq:7}
 \overline{\Om_k(\blsf)} \subset \left\{x\in\overline{\hold}\ |\ \lsf_k(x) \leq \lsf_\ell(x),\  \forall \ell \in\K\setminus\{k\} \right\}.
\end{equation}
Gathering \eqref{eq:5} and \eqref{eq:7} we obtain \eqref{eq2}.

The inclusion $\bigcup_{k\in\K} \overline{\Om_k(\blsf)} \subset \overline{\hold}$ in \eqref{lemma4b} is clear since $\overline{\Om_k(\blsf)}$ are subsets of $\overline{\hold}$. 
Conversely, take $x\in \overline{\hold}$ and $k \in\operatorname{argmin}_{\ell \in\K} \lsf_\ell(x)\neq \emptyset$.
Then, in view of \eqref{eq2} we have $x\in\overline{\Om_{k}(\blsf)}$ which proves that $\overline{\hold}\subset  \bigcup_{k\in\K} \overline{\Om_k(\blsf)}$.
Thus we obtain \eqref{lemma4b}.
\end{proof}
Without additional restrictions on $\blsf$, the sets  $\Om_k(\blsf)$ may overlap, which is an undesirable behaviour. 
This situation can be prevented by using the proper assumptions on $\blsf$ that we describe further.
We start with several definitions.
\begin{definition}\label{def2}
Let $\I = \{k_1,k_2, \dots , k_\card\}\subset\K$, where the cardinal $\card$ of $\I$ satisfies $2 \leq\card\leq \ck$ and $k_i < k_{i+1}$, $1\leq  i\leq \card -1$.
Define 
$$\widehat\blsf_\I := (\widehat\lsf_1, \widehat\lsf_2,\dots ,\widehat\lsf_{\card-1})\in \C^\infty(\R^d,\R^{\card-1})$$ 
with $\widehat\lsf_i : = \lsf_{k_1} - \lsf_{k_{i+1}}$ for $1\leq i\leq \card-1$.  
Define also 
\begin{align}
\label{eq:malpha0} \mM_{\I}(\blsf) &  := \{x\in \overline{\hold}\ |\ \widehat\blsf_\I(x) = 0 \}\\
\notag & =\{x\in\overline{\hold}\ |\ \lsf_{k_i}(x) = \lsf_{k_j}(x), \mbox{ for all }\ 1\leq  i,j\leq \card \mbox{ and } i\neq j\} ,\\
\label{eq:ei} \mE_\I(\blsf)  &  :=\bigcap_{k\in\I} \partial\Om_k(\blsf)  ,
\end{align}
where $\partial\Om_k(\blsf)$ denotes the boundary of $\Om_k(\blsf)$ in $\R^d$.
\end{definition}
The set $\mE_\I(\blsf)$ is the set of interfaces  shared by all the phases $\Om_k(\blsf)$ whose index $k$ belongs to $\I$. 
We will see that the set $\mM_\I(\blsf)$ is, roughly speaking, the union of $\mE_\I(\blsf)$ and some ``ghost'' interfaces that will be useful for the analysis; see Example \ref{example1}. 
Our aim is to avoid the situation where $\mM_{\I}(\blsf)$ is ``thick'', i.e. the dimension of $\mM_{\I}(\blsf)$ should be at most $d-(\card-1)$ when $\card\leq d$, otherwise differentiability issues would arise when defining the LEM. 
This property can be guaranteed by imposing the proper condition on $D\widehat\blsf_\I$.
\begin{lemma}\label{lemma1}
Let $\I \subset\K$, $2 \le \card\leq d$, and 
assume $D\widehat\blsf_\I(x)$ has maximal rank $\card-1$ for all $x\in \mM_\I(\blsf)$.
Then, $\mM_{\I}(\blsf)$ is a $\C^\infty$-manifold of dimension $d-(\card-1)$ and we have 
\begin{equation}\label{eq:01}
\mE_\I(\blsf) \subset \mM_{\I}(\blsf).
\end{equation} 
\end{lemma}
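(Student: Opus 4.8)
The plan is to establish the two assertions separately. The manifold structure of $\mM_\I(\blsf)$ will follow from the implicit function theorem applied to the smooth map $\widehat\blsf_\I$, while the inclusion \eqref{eq:01} will follow directly from the closure characterization \eqref{eq2} proved in Lemma \ref{lemma001}.

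For the manifold structure, I would regard $\widehat\blsf_\I \in \C^\infty(\Rd,\R^{\card-1})$ as a smooth map and note that, by \eqref{eq:malpha0}, $\mM_\I(\blsf)$ is exactly the zero set $\widehat\blsf_\I^{-1}(0)$ intersected with $\overline{\hold}$. The hypothesis states that $D\widehat\blsf_\I(x)$ has maximal rank $\card-1$ at every $x\in\mM_\I(\blsf)$; since $\card\leq d$ we have $\card-1\leq d-1<d$, so the Jacobian $D\widehat\blsf_\I(x)\colon\R^d\to\R^{\card-1}$ is surjective there. I would then apply the implicit function theorem at each such point: after reordering coordinates, $\widehat\blsf_\I^{-1}(0)$ is locally the graph of a $\C^\infty$ function over a $(d-(\card-1))$-dimensional coordinate subspace. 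This furnishes smooth charts covering $\mM_\I(\blsf)$ and shows it is a $\C^\infty$-manifold of dimension $d-(\card-1)$, as claimed.

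For the inclusion \eqref{eq:01}, I would take $x\in\mE_\I(\blsf)=\bigcap_{k\in\I}\partial\Om_k(\blsf)$. For every $k\in\I$ we have $x\in\partial\Om_k(\blsf)\subset\overline{\Om_k(\blsf)}$, so \eqref{eq2} gives $\lsf_k(x)\leq\lsf_\ell(x)$ for all $\ell\in\K\setminus\{k\}$, and in particular $x\in\overline{\hold}$. Applying this for a pair of indices $k,k'\in\I$, once with $\ell=k'$ and once with the roles of $k$ and $k'$ exchanged, yields both $\lsf_k(x)\leq\lsf_{k'}(x)$ and $\lsf_{k'}(x)\leq\lsf_k(x)$, hence $\lsf_k(x)=\lsf_{k'}(x)$. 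Since $k,k'$ are arbitrary in $\I$, all the functions $\lsf_k$ with $k\in\I$ coincide at $x$, which is precisely the defining condition of $\mM_\I(\blsf)$ in \eqref{eq:malpha0}. Thus $x\in\mM_\I(\blsf)$.

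The inclusion part is essentially immediate once \eqref{eq2} is in hand, so the only substantive step is the manifold claim, and even that is a routine regular-value argument. The one point to treat with care is the role of $\overline{\hold}$: the implicit function theorem yields a manifold structure for $\widehat\blsf_\I^{-1}(0)$ as a subset of $\R^d$, so one should either interpret the manifold statement on the interior part $\widehat\blsf_\I^{-1}(0)\cap\hold$ or verify that the rank hypothesis, assumed at all points of $\mM_\I(\blsf)$ including any lying on $\partial\hold$, still delivers the local graph representation there. With this understood, the dimension count $d-(\card-1)$ follows at once.
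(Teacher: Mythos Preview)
Your proof is correct and follows essentially the same approach as the paper: the inclusion $\mE_\I(\blsf)\subset\mM_\I(\blsf)$ is obtained from the closure characterization \eqref{eq2} by comparing pairs of indices in $\I$, and the manifold structure comes from the regular value theorem applied to $\widehat\blsf_\I$. The paper organizes the inclusion argument via the intermediate identities $\mM_\I(\blsf)=\bigcap_{|\I_2|=2,\,\I_2\subset\I}\mM_{\I_2}(\blsf)$ and $\partial\Om_k(\blsf)\cap\partial\Om_\ell(\blsf)\subset\mM_{\{k,\ell\}}(\blsf)$, but this is just a repackaging of the same pairwise comparison you carry out directly.
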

\begin{proof}
In view of definition \eqref{eq:malpha0}, we have
\begin{equation}\label{eq:003}
\mM_{\I}(\blsf) = \bigcap_{\I_2\subset\I,|\I_2|=2} \mM_{\I_2}(\blsf). 
\end{equation}
Then, for all $\I_2=\{k,\ell\}\subset\I$ with $k\neq \ell$, we have the property
\begin{equation}\label{eq:02}
\partial\Om_{k}(\blsf) \cap \partial\Om_{\ell}(\blsf) \subset \mM_{\I_2}(\blsf).
\end{equation}
Indeed, let $x\in \partial\Om_{k}(\blsf) \cap \partial\Om_{\ell}(\blsf)$, then in view of \eqref{eq2} we have in particular $\lsf_{k}(x) \leq \lsf_{\ell}(x)$ and  $\lsf_{\ell}(x) \leq \lsf_{k}(x)$. 
Thus $\lsf_{k}(x) = \lsf_{\ell}(x)$ which implies $x\in \mM_{\I_2}(\blsf)$.
Then, using \eqref{eq:003} we obtain \eqref{eq:01}.

Next, due to \eqref{eq:malpha0} we have $\mM_{\I}(\blsf) = \widehat\blsf_\I^{-1}(\{0\})\cap\overline{\hold}$ and since  by assumption $D\widehat\blsf_\I(x)$ has rank $\card-1$ for all $x\in \mM_\I(\blsf)$, then $0$ is a regular value of $\widehat\blsf_\I|_{\overline{\hold}}$. 
This shows that  $\mM_{\I}(\blsf) = \widehat\blsf_\I^{-1}(\{0\})\cap\overline{\hold}$ is a $\C^\infty$-manifold of dimension $d-(\card-1)$.
\end{proof}

Note that \eqref{eq:01} and \eqref{eq:02} are only inclusions in general, this is illustrated in Example \ref{example1}. 
Indeed, in view of \eqref{def:omk} it may happen that $x$ satisfies $\lsf_{j}(x) = \lsf_{k}(x)>\lsf_{\ell}(x) = E_{\blsf}(x)$ for some pairwise distinct indices $j, k, \ell$, which would imply $x\in\mM_{\{j, k\}}(\blsf)$ even though $x\notin\partial\Om_{j}(\blsf) \cap \partial\Om_{k}(\blsf)$. 
In this sense, $\mM_{\I}(\blsf)$ contains the ``ghost'' interfaces $\mM_{\I}(\blsf)\setminus \mE_{\I}(\blsf)$.

We now give a condition that guarantees the non-overlapping of the phases $\Om_k(\blsf)$. 
\begin{proposition}\label{lemma1b}
Let $\{k,\ell\}\subset\K$ with $k\neq \ell$.
If  $|\nabla (\lsf_k - \lsf_\ell)|> 0$  on $\mM_{\{k,\ell\}}(\blsf)$, then we have
\begin{equation}\label{prop2_lemma2}
\Om_{k}(\blsf) \cap \Om_{\ell}(\blsf) =\emptyset. 
\end{equation}
\end{proposition}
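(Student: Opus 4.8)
The plan is to argue by contradiction. Suppose $\Om_k(\blsf)\cap\Om_\ell(\blsf)\neq\emptyset$ and fix a point $x$ in this intersection. The first step is to place $x$ on the coincidence set $\mM_{\{k,\ell\}}(\blsf)$. Since $x\in\Om_k(\blsf)$ and, by definition \eqref{def:omk:alt}, $\Om_k(\blsf)$ is contained in $\{\lsf_k\leq\lsf_m,\ \forall m\in\K\setminus\{k\}\}$, we obtain $\lsf_k(x)\leq\lsf_\ell(x)$; symmetrically, $x\in\Om_\ell(\blsf)$ gives $\lsf_\ell(x)\leq\lsf_k(x)$. Hence $\lsf_k(x)=\lsf_\ell(x)$, and since $x\in\hold\subset\overline{\hold}$ this means $x\in\mM_{\{k,\ell\}}(\blsf)$.

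The key step is then to upgrade this pointwise equality to an equality on a full neighborhood of $x$, exploiting that the phases are \emph{open} sets. As both $\Om_k(\blsf)$ and $\Om_\ell(\blsf)$ are open and contain $x$, the set $U:=\Om_k(\blsf)\cap\Om_\ell(\blsf)$ is an open neighborhood of $x$. On $U$ we have simultaneously $\lsf_k\leq\lsf_\ell$ (because $U\subset\Om_k(\blsf)$) and $\lsf_\ell\leq\lsf_k$ (because $U\subset\Om_\ell(\blsf)$), so $\lsf_k-\lsf_\ell$ vanishes identically on the open set $U$. Consequently all its first-order derivatives vanish at $x$, that is $\nabla(\lsf_k-\lsf_\ell)(x)=0$.

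Finally, because $x\in\mM_{\{k,\ell\}}(\blsf)$, the hypothesis $|\nabla(\lsf_k-\lsf_\ell)|>0$ on $\mM_{\{k,\ell\}}(\blsf)$ forces $|\nabla(\lsf_k-\lsf_\ell)(x)|>0$, which contradicts the vanishing of the gradient established in the previous step. Therefore no such $x$ can exist, and \eqref{prop2_lemma2} follows.

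I do not expect a serious obstacle in this argument; the single point requiring care is the transition from pointwise coincidence $\lsf_k(x)=\lsf_\ell(x)$ to coincidence on an open neighborhood, which is exactly where the openness of the phases themselves (rather than of their closures) is indispensable. The non-degeneracy assumption on $\nabla(\lsf_k-\lsf_\ell)$ along $\mM_{\{k,\ell\}}(\blsf)$ is precisely the condition excluding the degenerate configuration in which two of the functions $\lsf_k,\lsf_\ell$ agree on a region of positive measure, the only mechanism that could otherwise produce overlapping phases.
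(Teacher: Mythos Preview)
Your proof is correct and follows essentially the same route as the paper: argue by contradiction, use openness of the phases to obtain an open neighborhood of the hypothetical common point on which $\lsf_k=\lsf_\ell$, deduce that $\nabla(\lsf_k-\lsf_\ell)$ vanishes there, and contradict the non-degeneracy assumption on $\mM_{\{k,\ell\}}(\blsf)$. The only cosmetic difference is that the paper works with a ball $B(x,r)\subset\Om_k(\blsf)\cap\Om_\ell(\blsf)$ rather than the full intersection $U$, and skips the preliminary step of singling out $x\in\mM_{\{k,\ell\}}(\blsf)$ before passing to the neighborhood.
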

\begin{proof}
Assume that there exists $x\in \Om_{k}(\blsf) \cap \Om_{\ell}(\blsf)$. 
Since  $\Om_{k}(\blsf)$ and $\Om_{\ell}(\blsf)$ are open, there exists an open ball $B(x,r)$ of center $x$ and radius $r>0$ such that
$B(x,r)\subset \Om_{k}(\blsf) \cap \Om_{\ell}(\blsf)$.
Then, in view of \eqref{def:omk:alt} we have  for all $y\in B(x,r)$ that $\lsf_{k}(y)\leq \lsf_{\ell}(y)$ and $\lsf_{\ell}(y)\leq \lsf_{k}(y)$, which yields $\lsf_{k}(y) = \lsf_{\ell}(y)$.
Thus, we have $B(x,r)\subset\mM_{\{k, \ell\}}(\blsf)$ and consequently $D\widehat\blsf_{\{k,\ell\}}(y)= \nabla (\lsf_k - \lsf_\ell)(y) = 0$ for all $y\in B(x,r)$, which contradicts the hypothesis that $|\nabla (\lsf_k - \lsf_\ell)|> 0$ on $\mM_{\{k,\ell\}}(\blsf)$.
Thus we obtain \eqref{prop2_lemma2}. 
\end{proof}

The purpose of the next lemma is to give a characterization of the phase boundary 
$\partial\Om_k(\blsf)$ in terms of the sets $\mE_\I(\blsf)$.
This result is employed in Section \ref{sec:HJ} to model the motion of the interfaces $\partial\Om_k(\blsf)$ using $\blsf$. 
\begin{lemma}\label{lemma01}
For all $k\in\K$ we have
\begin{equation}\label{eq:109}
\hold\cap \bigcup_{\I\in\mathds{I}_k^2} \mE_\I(\blsf)  =  \hold\cap \bigcup_{\I\in\mathds{I}_k^r, r\ge 2} \mE_\I(\blsf) \subset \hold \cap \partial\Om_k(\blsf). 
\end{equation}
If in addition  $|D\widehat\blsf_{\I}| > 0$ on $\mM_{\I}(\blsf)$ for all $\I\in\mathds{I}_k^2$, then
\begin{equation}\label{eq:107}
\hold\cap \bigcup_{\I\in\mathds{I}_k^2} \mE_\I(\blsf)  = \hold\cap \bigcup_{\I\in\mathds{I}_k^r, r\ge 2} \mE_\I(\blsf) = \hold \cap \partial\Om_k(\blsf).
\end{equation}
\end{lemma}
\begin{proof}
Property \eqref{eq:109} is clear in view of definition \eqref{eq:ei} and the fact that $\mE_{\I^0}(\blsf)\subset \mE_\I(\blsf)$ if $\I\subset \I^0$.
Now suppose in addition that  $|D\widehat\blsf_{\I}| > 0$ on $\mM_{\I}(\blsf)$ for all $\I\in\mathds{I}_k^2$.
Then we have \eqref{prop2_lemma2} for any $\ell\in\K\setminus\{k\}$.
If $\partial\Om_k(\blsf)\cap\hold =\emptyset$, then \eqref{eq:107} is trivially satisfied, otherwise take $x\in \partial\Om_k(\blsf)\cap\hold$.
Since $\partial \Om_k(\blsf) = \overline{\Om_k(\blsf)}\setminus \Om_k(\blsf)$, it is not possible that $\lsf_k(x) <\lsf_\ell(x)$ for all $\ell \in\K\setminus\{k\}$, otherwise $x\in \Om_k(\blsf)$ in view of \eqref{eq1}.
Thus we must have $\lsf_k(x) = \lsf_\ell(x)$ for some $\ell \in\K\setminus\{k\}$.
In view of \eqref{eq2} this implies $x\in  \overline{\Om_\ell(\blsf)}\cap\hold$.
Then $x$ cannot belong to $\Om_\ell(\blsf)$, otherwise there would exist an open ball $B(x,r)\subset \Om_\ell(\blsf)$ with a non-empty intersection with $\Om_k(\blsf)$
and $\Om_{k}(\blsf) \cap \Om_{\ell}(\blsf)$ would not be empty, which would contradict \eqref{prop2_lemma2}.
Thus $x\in  \partial\Om_\ell(\blsf)\cap\hold$
and in turn $x\in\mE_{\{k,\ell\}}(\blsf)$, so this proves the other inclusion and yields \eqref{eq:107}.
\end{proof}

We now present a simple two-dimensional example to illustrate Lemma \ref{lemma1b}, Lemma \ref{lemma1} and Lemma \ref{lemma01}.
\begin{ex}\label{example1}
Let $\hold = (0,1)^2$,  $d = 2$,  $\K = \{0,1,2\}$,  $\I=\{k_1,k_2\} = \{0,1\}$, $\card=2$, and choose $\lsf_0\equiv 0$, $\lsf_1(x_1,x_2) = x_2-x_1$,  $\lsf_2(x_1,x_2) = 1- x_1-x_2$.
Then we have $\widehat\blsf_\I = (\widehat\lsf_1) =  (\lsf_{k_1} - \lsf_{k_2}) = (\lsf_0 - \lsf_1)= (- \lsf_1)$ and 
\begin{equation}
D\widehat\blsf_\I(x) 
= 
\begin{pmatrix}
1 & -1
\end{pmatrix}.
\end{equation}
Clearly, $D\widehat\blsf_\I(x)$ has rank $1$ for any $x\in\hold$, so we can apply Lemma  \ref{lemma1}, this shows that $\mM_{\I}(\blsf)$ is a a $\C^\infty$-manifold of dimension $d-(\card-1) = 1$.
An explicit calculation using \eqref{eq:malpha0} yields 
$$\mM_{\I}(\blsf) = \{x\in\overline{\hold}\ |\ \lsf_1(x) = 0 \}= \{x\in\overline{\hold}\ |\ x_1 = x_2 \},$$
so  $\mM_{\I}(\blsf)$ is a diagonal of the square $\hold$.
The lower envelope is $E_{\blsf} = \lsf_1\chi_{\Om_1(\blsf)} + \lsf_2\chi_{\Om_2(\blsf)}$ with
\begin{align*}
\Om_0(\blsf) & = \{x\in\hold\ |\ 0< \lsf_1(x) \mbox{ and }0< \lsf_2(x)\} = \{x\in\hold\ |\ x_1< x_2 \mbox{ and } x_2< 1-x_1\}, \\
\Om_1(\blsf) & = \{x\in\hold\ |\ \lsf_1(x)< 0 \mbox{ and }\lsf_1(x)< \lsf_2(x)\}= \{x\in\hold\ |\ x_2< x_1 \mbox{ and } 2x_2< 1\},\\
\Om_2(\blsf) & = \{x\in\hold\ |\ \lsf_2(x)< 0 \mbox{ and }\lsf_2(x)< \lsf_1(x)\}= \{x\in\hold\ |\ 1-x_1< x_2 \mbox{ and }1< 2x_2\}.
\end{align*}
Then, we compute 
$$\mE_{\{0,1\}}(\blsf) = \po_{0}(\blsf)\cap \po_{1}(\blsf) = \{x\in\overline{\hold}\ |\ x_1 = x_2 \mbox{ and } x_2\leq 1/2\}\subsetneq\mM_{\{0,1\}}(\blsf),$$
and we obtain similar characterizations for $\mE_{\{0,2\}}(\blsf)$ and $\mE_{\{1,2\}}(\blsf)$;
see Figure \ref{fig1} for an illustration of the geometry.

Finally, we can check that $|D\widehat\blsf_{\I}| > 0$ on $\mM_{\I}(\blsf)$ for all $\I\in\mathds{I}_0^2$, thus \eqref{eq:107} holds for $k=0$ according to Lemma \ref{lemma01}, and  \eqref{eq:107} becomes in this specific case
\begin{equation*}
\hold\cap (\mE_{\{0,1\}}(\blsf) \cup \mE_{\{0,2\}}(\blsf))  = \hold\cap (\mE_{\{0,1\}}(\blsf) \cup \mE_{\{0,2\}}(\blsf)\cup \mE_{\{0,1,2\}}(\blsf)) = \hold \cap \partial\Om_0(\blsf);
\end{equation*}
see Figure \ref{fig1}. Similar properties are obtained for $k=1$ and $k=2$ applying Lemma \ref{lemma01}.
\begin{figure}
\begin{center}
\begin{tikzpicture}[scale=7.0]
\draw[thick]  (0,0) --(1,0) --(1,1)--(0,1)--(0,0);
\draw[dashed, thick]  (0,0)--(0.5,0.5);
\draw[dashed, thick]  (0.5,0.5)--(1,1);
\draw[dash pattern=on \pgflinewidth off 1pt, thick]  (0,1)--(1,0);
\draw[dash pattern=on 3pt off 2pt on \the\pgflinewidth off 2pt, thick]  (0,0.5)--(1,0.5);
\node[text=black] at (0.5,0.9){$\hold$};
\node[text=black, rotate=45] at (0.72,0.8){$\mM_{\{0,1\}}(\blsf)$};
\node[text=black, rotate=-45] at (0.78,0.3){$\mM_{\{0,2\}}(\blsf)$};
\node[text=black] at (0.2,0.55){$\mM_{\{1,2\}}(\blsf)$};
\end{tikzpicture}
\hspace{0.5cm}
\begin{tikzpicture}[scale=7.0]
\draw[thick]  (0,0) --(1,0) --(1,1)--(0,1)--(0,0);
\draw[thick]  (0,0)--(0.5,0.5);
\draw[thick]  (0,1)--(0.5,0.5);
\draw[thick]  (0.5,0.5)--(1,0.5);
\node[text=black] at (0.8,0.8){$\Om_2(\lsf)$};
\node[text=black] at (0.8,0.2){$\Om_1(\lsf)$};
\node[text=black] at (0.2,0.5){$\Om_0(\lsf)$};
\node[text=black, rotate=45] at (0.28,0.2){$\mE_{\{0,1\}}(\blsf)$};
\node[text=black, rotate=-45] at (0.28,0.8){$\mE_{\{0,2\}}(\blsf)$};
\node[text=black] at (0.75,0.55){$\mE_{\{1,2\}}(\blsf)$};
\end{tikzpicture}
\caption{Illustration of the sets $\hold = (0,1)^2$, $\mM_{\I}(\blsf)$ (left) and $\Om_k(\blsf)$, $\mE_{\I}(\blsf)$ (right) for $\I = \{0,1\},\{0,2\},\{1,2\}$ from Example \ref{example1}, and of the triple point  $\mE_{\{0,1,2\}}(\blsf) = \{(\frac{1}{2},\frac{1}{2})\}$  from Example \ref{example2}.}\label{fig1}
\end{center}
\end{figure} 
\end{ex}

In Lemma \ref{lemma1} we have treated the case $\card\leq d$. 
Now we treat the degenerate case $\card \geq d+1$ where  $\mM_{\I}(\blsf)$ has zero dimension.
\begin{lemma}\label{lemma003}
Assume $\ck\geq d+1$  and  $\I  \subset \K$ with  $\card\geq d+1$.  
Suppose that $D\widehat\blsf_{\I}(x)$ has rank $d$ for all $x\in \mM_\I(\blsf)$.
Then, either $\mM_{\I}(\blsf) = \emptyset$ or $\mM_{\I}(\blsf)$ is a set of isolated points  and  we have
\begin{equation}\label{res05}
\mE_\I(\blsf) \subset \mM_{\I}(\blsf).
\end{equation} 
If in addition $\ck = d+1$  and $\I=\K$, then we also have 
\begin{equation}\label{res05b}
\mE_\I(\blsf) = \mM_{\I}(\blsf).
\end{equation} 
\end{lemma}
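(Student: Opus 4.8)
The plan is to treat the three assertions in turn. For the discreteness of $\mM_\I(\blsf)$, I would fix $x_0\in\mM_\I(\blsf)$ and exploit that $\widehat\blsf_\I$ maps $\R^d$ into $\R^{\card-1}$ with $\card-1\geq d$, so the rank hypothesis means $D\widehat\blsf_\I(x_0)$ has full column rank $d$. Selecting $d$ rows that form an invertible $d\times d$ minor singles out a map $G\colon\R^d\to\R^d$ built from $d$ of the components of $\widehat\blsf_\I$, with $DG(x_0)$ invertible. By the inverse function theorem $G$ is a local diffeomorphism, hence injective, on some neighbourhood $U$ of $x_0$; since $G(x_0)=0$ and $\mM_\I(\blsf)\cap U$ is contained in the zero set of $G$, we get $\mM_\I(\blsf)\cap U=\{x_0\}$. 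Thus every point of $\mM_\I(\blsf)$ is isolated, so $\mM_\I(\blsf)$ is either empty or a set of isolated points.

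The inclusion \eqref{res05} requires no rank assumption and follows exactly as in Lemma \ref{lemma1}: the identity \eqref{eq:003}, namely $\mM_\I(\blsf)=\bigcap_{\I_2\subset\I,\,|\I_2|=2}\mM_{\I_2}(\blsf)$, is immediate from \eqref{eq:malpha0}, while for each pair $\I_2=\{k,\ell\}\subset\I$ the general inclusion \eqref{eq:02} gives $\mE_\I(\blsf)\subset\partial\Om_k(\blsf)\cap\partial\Om_\ell(\blsf)\subset\mM_{\I_2}(\blsf)$. Intersecting over all such pairs yields $\mE_\I(\blsf)\subset\mM_\I(\blsf)$.

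The substantive step is the reverse inclusion $\mM_\K(\blsf)\subset\mE_\K(\blsf)$ in the case $\ck=d+1$, $\I=\K$. Here $\widehat\blsf_\K=(\lsf_0-\lsf_1,\dots,\lsf_0-\lsf_d)$ is a map $\R^d\to\R^d$ whose Jacobian at a point $x\in\mM_\K(\blsf)$ is a square $d\times d$ matrix of rank $d$, hence invertible; the inverse function theorem makes $\widehat\blsf_\K$ a local diffeomorphism, in particular an \emph{open} map, near $x$. Given $x\in\mM_\K(\blsf)$ all values $\lsf_k(x)$ coincide, so $x\in\overline{\Om_k(\blsf)}$ for every $k$ by \eqref{eq2}, and it only remains to show $x\notin\Om_k(\blsf)$. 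Arguing by contradiction, suppose $x\in\Om_0(\blsf)$; then a ball $B(x,r)\subset\Om_0(\blsf)$ exists, and \eqref{def:omk:alt} forces $\lsf_0\leq\lsf_\ell$ on $B(x,r)$ for each $\ell$, i.e. $\widehat\blsf_\K(B(x,r))$ lies in the nonpositive orthant of $\R^d$. But openness makes $\widehat\blsf_\K(B(x,r))$ a neighbourhood of $\widehat\blsf_\K(x)=0$, which must contain points outside that orthant --- a contradiction. Hence $x\notin\Om_k(\blsf)$ for every $k$, so $x\in\partial\Om_k(\blsf)$ for all $k$, that is $x\in\mE_\K(\blsf)$; together with \eqref{res05} this gives \eqref{res05b}.

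I expect the orthant argument of the previous paragraph to be the crux: it is precisely the borderline dimension count $\card-1=d$ that turns $\widehat\blsf_\K$ into a map between equidimensional spaces with invertible Jacobian, hence a local surjection onto a full neighbourhood of $0$, and a full neighbourhood cannot be confined to the nonpositive orthant. The remaining care is purely bookkeeping: the ball $B(x,r)$ can be shrunk to lie simultaneously in $\Om_0(\blsf)$ and in the domain where $\widehat\blsf_\K$ is a diffeomorphism, both being open, and the possibility $x\in\partial\hold$ is harmless for the forward inclusions since $\Om_k(\blsf)\subset\hold$ while \eqref{eq2} already accounts for closure points in $\overline\hold$.
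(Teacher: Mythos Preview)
Your proof is correct. The first two parts---isolating points via the inverse function theorem on a $d\times d$ minor, and the inclusion $\mE_\I(\blsf)\subset\mM_\I(\blsf)$ via pairwise intersections---match the paper's argument essentially verbatim.

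For the reverse inclusion \eqref{res05b} you take a genuinely different route. The paper argues in two stages: assuming $x\in\Om_k(\blsf)$, it first shows $x\notin\Om_\ell(\blsf)$ for every $\ell\neq k$ (else $\lsf_k\equiv\lsf_\ell$ on a ball, killing a row of the Jacobian), hence $x\in\partial\Om_\ell(\blsf)$; then it produces points $y\in\Om_k(\blsf)\cap\Om_\ell(\blsf)$ arbitrarily close to $x$, at which $D\widehat\blsf_\K(y)$ is singular, and passes to the limit using continuity of $\det D\widehat\blsf_\K$ to contradict invertibility at $x$. Your orthant argument is more direct: the local diffeomorphism $\widehat\blsf_\K$ is an open map, so $\widehat\blsf_\K(B(x,r))$ is a full neighbourhood of $0$, yet the constraint $\lsf_0\leq\lsf_\ell$ on $B(x,r)$ traps the image in the closed nonpositive orthant, which has empty interior about $0$. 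This bypasses the paper's limiting step entirely and uses only the qualitative openness conclusion of the inverse function theorem. One small point worth making explicit: your ``suppose $x\in\Om_0(\blsf)$'' is not literally without loss of generality, since $\widehat\blsf_\K$ is anchored at the index $k_1=0$; but the same contradiction obtains for $x\in\Om_j(\blsf)$ with $j\geq 1$, because then the $j$-th component $\lsf_0-\lsf_j$ of $\widehat\blsf_\K$ is nonnegative on $B(x,r)$, confining the image to a closed half-space through $0$, which again cannot contain an open neighbourhood of $0$.
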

\begin{proof}
Assume $\mM_{\I}(\blsf)\neq\emptyset$ and let $x\in \mM_{\I}(\blsf)$. 
Thanks to the assumption that $D\widehat\blsf_{\I}(x)$ has rank $d$, there exists a subset $\I^0\subset\I$ with cardinal $|\I^0| = d+1$ such that the square matrix $D\widehat\blsf_{\I^0}(x)$ is invertible.
In view of Definition \ref{def2} we have $\mM_\I(\blsf)\subset\mM_{\I^0}(\blsf)$, and we also  have 
$\widehat\blsf_{\I^0}(x) = 0$ due to \eqref{eq:malpha0}.
Thus, we can apply the inverse function theorem, and there exists an open ball $B(x,\delta)$ for some $\delta>0$ such that $(\widehat\blsf_{\I^0})_{|B(x,\delta)}$ is a diffeomorphism. 
This yields  $\widehat\blsf_{\I^0}^{-1}(\{0\})\cap\hold = \{x\}$ and $\widehat\blsf_{\I^0}(y) \neq 0$ for $y\in B(x,\delta)\setminus\{x\}$, which shows that $x$ is an isolated zero of $\widehat\blsf_{\I^0}$, hence  $\mM_{\I^0}(\blsf)$ is a set of isolated points.
Since $\mM_\I(\blsf)\subset\mM_{\I^0}(\blsf)$, $\mM_\I(\blsf)$ is also a set of isolated points. 
Then we can prove that $\mE_\I(\blsf) \subset \mM_{\I}(\blsf)$ in a similar way as in Lemma~\ref{lemma1}.

Now we consider the particular case  $\ck = d+1$ and $\I=\K$.
In this case,  $D\widehat\blsf_{\I}(x)$ is a square matrix and the assumption  that $D\widehat\blsf_{\I}(x)$ has rank $d$ is equivalent to $D\widehat\blsf_{\I}(x)$ invertible. 
Let $x\in \mM_{\I}(\blsf)$, then we have by definition that $\lsf_k(x) = \lsf_\ell(x) \mbox{ for all } k,\ell\in\K$.
In view of \eqref{eq2}, this means that $x\in \overline{\Om_k(\blsf)}$ for all $k\in\K$.
We prove now that $x\in \partial\Om_k(\blsf)$ for all $k\in\K$.
Indeed, assume that $x\in \Om_k(\blsf)$ for some $k\in\K$.
In this case we prove that $x\notin \Om_\ell(\blsf)$ for all $\ell\in \K\setminus\{k\}$, otherwise there would exist some $\ell\in \K\setminus\{k\}$ such that $x\in \Om_k(\blsf)\cap\Om_\ell(\blsf)$.
Since this intersection is open, there would exist $B(x,r)\subset\Om_k(\blsf)\cap\Om_\ell(\blsf)$ with $r>0$, and we would have $\phi_k(y) = \phi_\ell(y)$ for all $y\in B(x,r)$ due to \eqref{eq2}.
This would imply that $D\widehat\blsf_{\I}(x)$ is not invertible which leads to a contradiction.
Thus, we must have $x\in \partial\Om_\ell(\blsf)$  for all $\ell\in \K\setminus\{k\}$.
If this was the case however, considering the assumption $x\in\Om_k(\blsf)$ we would have $B(x,r)\subset\Om_k(\blsf)$ and $B(x,r)\cap \Om_\ell(\blsf)\neq \emptyset$, for any $r>0$ sufficiently small, and in turn there would exist $y\in \Om_k(\blsf)\cap\Om_\ell(\blsf)\cap B(x,r)$ and  $D\widehat\blsf_{\I}(y)$ would again not be invertible.
Choosing $r$ sufficiently small, and considering that $\widehat\blsf_{\I}$ is smooth, this would contradict the hypothesis that $D\widehat\blsf_{\I}(x)$ be invertible.
Thus, the initial assumption $x\in \Om_k(\blsf)$ for some $k\in\K$ is not possible, and this proves that  $x\in \partial\Om_k(\blsf)$ for all $k\in\K$.
In this way we obtain $x\in\mE_\I(\blsf)$ and consequently  $\mM_{\I}(\blsf)\subset \mE_\I(\blsf)$, which yields \eqref{res05b}.
\end{proof}
\begin{definition}\label{def3}
When $\ck = d+1$, $\I=\K$  and the assumptions of Lemma \ref{lemma003} are satisfied, the elements of $\mE_\I(\blsf)$ are called $(d+1)$-tuple points.
In the particular case $d=2$, $(d+1)$-tuple points are called triple points following the standard denomination.
\end{definition}
\begin{ex}\label{example2}
Consider the same functions as in Example \ref{example1}, but now with $\card=d+1 = 3$ and $\I=\{k_1,k_2,k_3\} = \K = \{0,1,2\}$. 
We also have
$$\widehat\blsf_\I = (\widehat\lsf_1, \widehat\lsf_2) = (\lsf_{k_1} - \lsf_{k_2},\lsf_{k_1} - \lsf_{k_3})
= (\lsf_0 - \lsf_1,\lsf_0 - \lsf_2) = (- \lsf_1,- \lsf_2) $$
which yields
$$ D\widehat\blsf_\I(x) = 
\begin{pmatrix}
-1 & 1\\
-1 & -1
\end{pmatrix}.
$$
Clearly, $D\widehat\blsf_\I(x)$ is invertible for any $x\in\hold$ so the assumptions of Lemma \ref{lemma003} are satisfied, yielding $\mM_\I(\blsf) = \mE_\I(\blsf)$.
Furthermore, it is easy to see that  $\mM_\I(\blsf) = \mE_\I(\blsf) = \{(\frac{1}{2},\frac{1}{2})\}$ as in \eqref{res05b}.
\end{ex}
\begin{ex}\label{example3D}
Let $\hold = (0,1)^3$,  $d = 3$,  $\K = \{0,1,2,3\}$,  $\I=\{k_1,k_2,k_3\} = \{0,1,2\}$, $\card=3$, and choose $\lsf_0\equiv 0$, $\lsf_1(x_1,x_2,x_3) = x_2-x_1$,  $\lsf_2(x_1,x_2,x_3) = 1- x_1-x_2$,  $\lsf_2(x_1,x_2,x_3) = x_3 - 0.5$.
Then we have $\widehat\blsf_\I = (\widehat\lsf_1,\widehat\lsf_2) =  (\lsf_{k_1} - \lsf_{k_2},\lsf_{k_1} - \lsf_{k_3}) = (\lsf_0 - \lsf_1, \lsf_0 - \lsf_2)= (- \lsf_1,- \lsf_2)$ and 
\begin{equation}
D\widehat\blsf_\I(x) 
= 
\begin{pmatrix}
1 & -1 & 0\\
-1 & -1 & 0
\end{pmatrix}
\end{equation}
and $D\widehat\blsf_\I(x)$ has rank $2$ for any $x\in\hold$.
In view of Lemma \ref{lemma1}, $\mM_{\I}(\blsf)$ is a a $\C^\infty$-manifold of dimension $d-(\card-1) =1$.

Now for $\I=\K$ we would have
\begin{equation}
D\widehat\blsf_\I(x) 
= 
\begin{pmatrix}
1 & -1 & 0\\
-1 & -1 & 0\\
0 & 0 & 1
\end{pmatrix}
\end{equation}
which has rank $3$ for any $x\in\hold$, so we conclude in view of \eqref{res05b} that
$\mE_\I(\blsf) = \mM_{\I}(\blsf)$ is a set of isolated points.
An explicit calculation actually shows that $\mM_\I(\blsf) = \mE_\I(\blsf) = \{(\frac{1}{2},\frac{1}{2},\frac{1}{2})\}$.
\end{ex}

Gathering the results of this section, we have obtained a condition on $\blsf$ so that the phases $\Om_{k}(\blsf)$, $k\in\K$, form a partition of $\hold$, and that the dimension of the boundary of $\Om_{k}(\blsf)$ is at most $d-1$, i.e. the boundaries are not ``thick''. 
In fact, we have obtained a stronger result in this section since we have shown in Lemma \ref{lemma1} that the intersection of the boundaries of $\Om_{k}(\blsf)$ for $k\in\I$ has at most dimension $d-(\card-1)$, which allows to avoid degenerate situations.

We summarize these results in Theorem \ref{thm1}.
We first define partitions of $\hold$ indexed by $\K$.
\begin{definition}[$\K$-partitions of $\hold$]\label{def4}
Let $\mathds{P}$ denote the set of open subsets of $\hold\subset\R^d$.
For $\K =\{0,1,\dots ,\ck -1\}\subset\mathds{N}$, $\pd$ denotes the set of vector of domains  
$\bfom := (\Om_0,\dots,\Om_{\ck-1})$
with $\Om_k\in\mathds{P}$ for all $k\in\K$, $\Om_k \cap \Om_\ell =\emptyset$ for all $\{k,\ell\}\subset\K,\  k\neq \ell$ and $\bigcup_{k\in\K} \overline{\Om_k} = \overline{\hold}$. 
\end{definition}

\begin{thm}\label{thm1}
Let $\K =\{0,1,\dots ,\ck -1\}\subset\mathds{N}$, $k\in\K$,  $\blsf \in \C^\infty(\R^d,\R^{\ck})$ 
and $\Om_k(\blsf)$ defined as in Definition \ref{def1}.
Then, if $|D\widehat\blsf_{\I}| > 0$ on $\mM_{\I}(\blsf)$ for all $\I\in\mathds{I}_k^2$, we have
$$\bfom(\blsf) := (\Om_0(\blsf),\dots,\Om_{\ck-1}(\blsf))\in \pd$$
and the dimension of $\partial\Om_k(\blsf)$ is at most $d-1$.
\end{thm}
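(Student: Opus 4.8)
The plan is to verify directly that $\bfom(\blsf)$ meets the three requirements defining $\pd$ in Definition \ref{def4} — openness of each component, pairwise disjointness, and the closure-covering identity — and then to extract the dimension bound from Lemma \ref{lemma01} combined with Lemma \ref{lemma1}. Two of the three partition requirements are already available. Each $\Om_k(\blsf)$ is open because it is defined in \eqref{def:omk} as an interior, so $\Om_k(\blsf)\in\mathds{P}$; and the covering identity $\bigcup_{k\in\K}\overline{\Om_k(\blsf)}=\overline{\hold}$ is precisely \eqref{lemma4b} of Lemma \ref{lemma001}, which needs no extra hypothesis.

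The remaining partition requirement, and the one where the hypothesis is used, is the pairwise disjointness $\Om_j(\blsf)\cap\Om_\ell(\blsf)=\emptyset$ for every pair $\{j,\ell\}\subset\K$ with $j\neq\ell$. For each such pair I would invoke Proposition \ref{lemma1b}. The point linking the two hypotheses is that for a two-element set $\I=\{j,\ell\}$ the reduced map $\widehat\blsf_\I$ has the single component $\lsf_j-\lsf_\ell$ (up to sign), whence $D\widehat\blsf_\I=\nabla(\lsf_j-\lsf_\ell)$ and $|D\widehat\blsf_\I|=|\nabla(\lsf_j-\lsf_\ell)|$ on $\mM_{\{j,\ell\}}(\blsf)$. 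Thus $|D\widehat\blsf_\I|>0$ on $\mM_\I(\blsf)$ is exactly the non-degeneracy condition Proposition \ref{lemma1b} needs to produce $\Om_j(\blsf)\cap\Om_\ell(\blsf)=\emptyset$ via \eqref{prop2_lemma2}. Carrying this out for all pairs gives $\bfom(\blsf)\in\pd$.

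For the dimension bound I would write $\partial\Om_k(\blsf)=(\partial\Om_k(\blsf)\cap\hold)\cup(\partial\Om_k(\blsf)\cap\partial\hold)$. The interior part is governed by \eqref{eq:107} of Lemma \ref{lemma01}, which under the present hypothesis equals the finite union $\hold\cap\bigcup_{\I\in\mathds{I}_k^2}\mE_\I(\blsf)$. For each such pair $\I$, the inclusion $\mE_\I(\blsf)\subset\mM_\I(\blsf)$ holds by \eqref{eq:01}, and since $\card=2$ with $d\ge 2$ and $D\widehat\blsf_\I$ of maximal rank $\card-1=1$ on $\mM_\I(\blsf)$, Lemma \ref{lemma1} makes $\mM_\I(\blsf)$ a $\C^\infty$-manifold of dimension $d-1$. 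A finite union of $(d-1)$-dimensional manifolds has dimension at most $d-1$, so the interface part of $\partial\Om_k(\blsf)$ inside $\hold$ is not thick; the remaining part lies in the fixed boundary $\partial\hold$.

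Since each step reduces to citing an earlier result, the only genuinely delicate issue is bookkeeping with the quantifiers. The dimension bound for the fixed index $k$ uses only the pairs $\I\in\mathds{I}_k^2$, matching the stated hypothesis verbatim. The disjointness needed for $\bfom(\blsf)\in\pd$, on the other hand, ranges over all pairs $\{j,\ell\}$, not merely those containing $k$: a single index $k$ controls only the interfaces adjacent to $\Om_k(\blsf)$ and cannot rule out an overlap $\Om_j(\blsf)\cap\Om_\ell(\blsf)$ between two other phases. I would therefore read the hypothesis as imposed for every $k\in\K$ — equivalently, $|D\widehat\blsf_\I|>0$ on $\mM_\I(\blsf)$ for every two-element $\I\subset\K$ — which is the form actually required to close the argument.
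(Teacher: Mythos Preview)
Your proof is correct and follows the same route as the paper, which also derives $\bfom(\blsf)\in\pd$ from Proposition~\ref{lemma1b}, Lemma~\ref{lemma001} and Definition~\ref{def4}, and the dimension bound from Lemma~\ref{lemma1} with $\card=2$. Your treatment is in fact more complete on two points: the paper's terse ``direct application of Lemma~\ref{lemma1}'' leaves implicit the step you spell out --- passing through \eqref{eq:107} of Lemma~\ref{lemma01} to cover $\partial\Om_k(\blsf)\cap\hold$ by the sets $\mE_\I(\blsf)\subset\mM_\I(\blsf)$ --- and your closing remark on the quantifiers is well taken, since the paper's proof also tacitly assumes the hypothesis for every two-element $\I\subset\K$, exactly as you propose reading it.
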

\begin{proof}
The fact that $\bfom(\blsf) \in \pd$  is an immediate consequence of Proposition \ref{lemma1b}, Lemma \ref{lemma001} and Definition \ref{def4}.
A direct application of Lemma \ref{lemma1} in the case $|\I| =2$ shows  that the dimension of $\partial\Om_k(\blsf)$ is at most $d-1$.
\end{proof}

\section{Multiphase shape optimization}\label{sec:flow}
We assume that $\hold$ is a  Lipschitz, simply connected, and piecewise $\C^1$ domain.
Denote by $\mathcal{S}$ the set of singular points of $\partial\hold$, then  the outward unit normal vector $\bn$  to $\hold$ is well-defined on $ \partial \hold\setminus \mathcal{S}$.
For $\ke\geq 1$ we define
\begin{align}
\C^{\ke}_c(\hold,\R^d) &:=\{\bta\in \C^{\ke}(\hold,\R^d)\ |\ \bta\text{ has compact support in } \hold\},\\
\C^{\ke}_{\partial \hold}(\overline{\hold},\R^d) &:=\{\bta\in \C^{\ke}( \overline{\hold},\R^d)\ |\ \bta\cdot \bn =0 \text{ on } \partial \hold\setminus \mathcal{S}\text{ and } \bta = 0\text { on }\mathcal{S}\}.
\end{align}
Consider a vector field $\bta\in \C^{1}_{\partial \hold}(\overline{\hold},\R^d)$ and the associated flow
$\Tt^{\bta}:\overline{\hold}\rightarrow \overline{\hold}$, $t\in [0,\tz]$ defined for each $x_0\in \overline{\hold}$ as $\Tt^{\bta}(x_0):=\bx(t)$, where $\bx:[0,\tz]\rightarrow \R^d$ is the solution to
\begin{align}\label{Vxt}
\begin{split}
\dot{\bx}(t)&= \bta(\bx(t))    \quad \text{ for } t\in [0,\tz],\quad  \bx(0) =x_0.
\end{split}
\end{align}
For $\Om\in \mathds{P}$, 
we consider the family of perturbed domains  
\begin{equation}\label{domain}
\Om_t := \Tt^{\bta}(\Omega). 
\end{equation}
In a similar way, for $\bfom\in\pd$ we define 
\begin{equation}\label{domain_vect}
\bfom_t := \Tt^{\bta}(\bfom) = (\Tt^{\bta}(\Omega_0),\dots,\Tt^{\bta}(\Omega_{\ck-1})). 
\end{equation}
For $\tz$ sufficiently small, it can be shown that $\Tt^{\bta}:\overline{\hold}\rightarrow\overline{\hold}$ is bijective and maps interior points onto interior points and boundary points onto boundary points; see \cite[Chapter 4, Section 5.1 and Remark 5.2]{MR2731611}.
A similar result holds if we take $\bta\in \C^{\ke}_c(\hold,\R^d)$ instead of $\bta\in \C^{1}_{\partial \hold}(\overline{\hold},\R^d)$.
This implies that $\bfom_t\in\pd$ for all $t\in [0,\tz]$.
When there is no ambiguity we will often write $\Tt$ for simplicity instead of $\Tt^{\bta}$ in the rest of the paper.

We are now ready to give the definition of shape differentiability.
\begin{definition}[Shape derivative]\label{def1b}
Let $J : \mathds{P} \rightarrow \R$ be a shape functional.
\begin{itemize}
\item[(i)] The Eulerian semiderivative of $J$ at $\Omega$ in direction $\bta \in  \C^{1}_{\partial \hold}(\overline{\hold},\R^d)$
is defined by, when the limit exists,
\begin{equation}
\dJ(\Omega)(\bta):= \lim_{t \searrow 0}\frac{J(\Omega_t)-J(\Omega)}{t}.
\end{equation}
\item[(ii)] $J$ is said to be \textit{shape differentiable} at $\Omega$ if it has a Eulerian semiderivative at $\Omega$ for all $\bta \in  \C^{1}_{\partial \hold}(\overline{\hold},\R^d)$ and the mapping
\begin{align*}
\dJ(\Omega):  \C^{1}_{\partial \hold}(\overline{\hold},\R^d) &  \to \R,\; \bta     \mapsto \dJ(\Omega)(\bta)
\end{align*}
is linear and continuous, in which case $\dJ(\Omega)(\bta)$ is called the \textit{shape derivative}  of $J$ at $\Omega$ in direction $\bta \in  \C^{1}_{\partial \hold}(\overline{\hold},\R^d)$.
\end{itemize}
\end{definition}
For a multiphase functional $\mJ : \pd\rightarrow \R$, we define the Eulerian shape derivative $d\mJ(\bfom)(\bta)$ in a similar way as
\begin{equation}
d\mJ(\bfom)(\bta):= \lim_{t \searrow 0}\frac{\mJ(\bfom_t)-\mJ(\bfom)}{t}.
\end{equation}
For transformations $\bm{\Phi}_t$ satisfying $\bm{\Phi}_t(\Omega)=\Omega$ for all $t\in [0,\tz]$, the shape derivative clearly vanishes.
When $\Om$ is at least $\C^1$, this leads to the following structure theorem proved by Zol\'esio in \cite{zolesio1979identification}, see also \cite{MR2731611,SokZol92}.
\begin{thm}[Structure theorem]\label{thm:structure_theorem}
Let $\Om\in\mathds{P}$ be of class $\C^{\ke+1}$, $\ke\geq 0$.
Suppose $J$ is shape differentiable at $\Om$ and $\dJ(\Omega )$ is continuous for the $\C^\ke(\hold,\R^d)$-topology.
Then, there exists a linear and continuous functional $L: \C^\ke(\partial \Omega )\rightarrow \R$ such that for all $\bta\in  \C^\ke_{\partial\hold}(\overline{\hold},\Rd)$,
\begin{equation}
\label{volume}
\dJ(\Omega)(\bta)= L(\bta_{|\po }\cdot \bn).
\end{equation}
\end{thm}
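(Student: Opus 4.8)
The plan is to prove the Hadamard--Zol\'esio structure theorem by showing that $dJ(\Om)(\bta)$ depends only on the normal trace $\bta_{|\po}\cdot\bn$ on $\po$. The main idea is that vector fields which are tangent to $\po$ generate a flow that leaves $\Om$ invariant, hence produce a vanishing shape derivative; by linearity this forces $dJ(\Om)(\bta)$ to factor through the normal component of the boundary trace. Concretely, I would first establish the \textbf{invariance lemma}: if $\bta\in\C^\ke_{\partial\hold}(\overline{\hold},\R^d)$ satisfies $\bta_{|\po}\cdot\bn = 0$ on $\po$, then the flow $\Tt^{\bta}$ maps $\overline{\Om}$ onto itself, i.e.\ $\Tt^{\bta}(\Om)=\Om$ for all $t\in[0,\tz]$. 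This is where the regularity $\C^{\ke+1}$ of $\Om$ enters: one shows that the normal component of the velocity vanishing on $\po$ keeps trajectories starting on $\po$ inside $\po$. Since the flow preserves $\Om$, the remark preceding the statement gives $dJ(\Om)(\bta)=0$ for all such tangential fields.

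The second step is the \textbf{reduction to a boundary functional}. Using shape differentiability and linearity of $dJ(\Om)$, the invariance lemma shows that $dJ(\Om)(\bta)$ vanishes on the subspace of fields with $\bta_{|\po}\cdot\bn=0$. Therefore $dJ(\Om)(\bta)$ descends to a well-defined linear functional on the quotient, which can be identified with the space of normal traces $\{\bta_{|\po}\cdot\bn : \bta\in\C^\ke_{\partial\hold}(\overline{\hold},\R^d)\}$. Explicitly, I would define $L(\test) := dJ(\Om)(\bta)$ for any extension $\bta$ of $\test$, and use the invariance lemma to verify that the value is independent of the chosen extension: if $\bta_1,\bta_2$ both have normal trace $\test$, then $\bta_1-\bta_2$ is tangential, so $dJ(\Om)(\bta_1-\bta_2)=0$.

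The third step is to verify that the resulting functional $L:\C^\ke(\po)\to\R$ is \textbf{linear and continuous}. Linearity is inherited directly from the linearity of $dJ(\Om)$. For continuity, one uses the hypothesis that $dJ(\Om)$ is continuous for the $\C^\ke(\hold,\R^d)$-topology together with the existence of a bounded linear extension operator: given $\test\in\C^\ke(\po)$ one constructs $\bta\in\C^\ke_{\partial\hold}(\overline{\hold},\R^d)$ with $\bta_{|\po}\cdot\bn=\test$ and $\|\bta\|_{\C^\ke}\lesssim\|\test\|_{\C^\ke(\po)}$, which is possible because $\Om$ is of class $\C^{\ke+1}$. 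Composing the bounded extension with the continuous functional $dJ(\Om)$ yields continuity of $L$, and \eqref{volume} follows.

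I expect the \textbf{main obstacle} to be the invariance lemma, specifically the rigorous argument that a flow whose velocity is tangent to $\po$ preserves the boundary and hence the domain. This is precisely the point where the $\C^{\ke+1}$ regularity of $\Om$ is indispensable: one needs $\po$ to be a smooth enough manifold so that the restricted ODE stays on $\po$, and one must handle the interplay between the normal-trace condition on $\po$ and the behaviour of trajectories near the boundary. The remaining steps are essentially functional-analytic bookkeeping, but this geometric invariance statement carries the real content, and I would lean on the cited references \cite{zolesio1979identification,MR2731611,SokZol92} for the precise flow-regularity estimates rather than reproving them from scratch.
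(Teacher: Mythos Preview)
Your proposal is correct and follows the classical Hadamard--Zol\'esio argument. The paper does not supply its own proof of this statement; it simply cites \cite[pp.~480--481]{MR2731611}, and the invariance-lemma/quotient/extension strategy you outline is precisely the argument developed there.
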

\begin{proof}
See \cite[pp. 480-481]{MR2731611}.
\end{proof}
Despite its usefulness in the case of two phases, Theorem  \ref{thm:structure_theorem} is not relevant in the multiphase context, where usually all the entries $\Om_k$ of the vector $\bfom\in\pd$ are  curvilinear polygons or even less regular, since they form a partition of $\hold$. 
In fact, an abstract structure theorem exists in the case of open sets, see \cite[Theorem 3.6, pp. 479-480]{MR2731611}, but when the shape derivative can be written as an integral a more explicit characterization is needed. 
In \cite[Theorem 1.3]{MR2350816}, a general structure theorem is proven, which shows that the shape derivative can be written as $L(\bta_{|\po} \cdot \bn)$
even when $\Om$ is only a set of finite perimeter, which is in particular valid for Lipschitz domains. 
However, the linear form $L$ is in general not a boundary integral if $\Om$ is only Lipschitz or piecewise $\C^\ke$.
For example, the shape derivative of the perimeter contains Dirac measures at the vertices of $\po$ when $\Om$ is a polygon; see \cite[Proposition 2.6]{MR2350816}.  

The structure \eqref{volume} can be seen as a {\it strong form} of the shape derivative, in the sense that it requires a strong regularity of the domain while it tolerates a low regularity of the vector field $\bta$.
In the multiphase context, it is natural to use weaker structures of the shape derivatives, which are valid for domains with low regularity but involve the derivatives of $\bta$ in return, which requires more regularity for $\bta$.
In the case where the functional is defined as a volume integral, its shape derivative can be written as a volume integral instead of a boundary integral, then we call it {\it distributed shape derivative}, see \eqref{eq:first_order_tensor}.
Also, it is sometimes possible to write shape derivatives as boundary integrals on Lipschitz domains as in \eqref{158}.
In this case, the structure is slightly weaker than \eqref{volume}, as the linear form depends on $\bta_{|\po }$ instead of $\bta_{|\po }\cdot \bn$. 
These weaker expressions, in particular the distributed shape derivative, are key ingredients of the LEM.
We discuss now some fundamental properties of weak  expressions of shape derivatives.
First of all, it is useful to write the distributed shape derivative using a tensor representation, as will be seen in Proposition \ref{tensor_relations}.
\begin{definition}[Tensor representation of distributed shape derivative]\label{def:tensor}
Let $\bfom\in\pd$ and assume $\mJ:\pd\mapsto \R$ has a shape derivative at $\bfom$.
The shape derivative of $\mJ$ admits a tensor representation of order $1$ if there exist a first-order tensor $\Sb_0\in L^1(\hold,\R^{d})$ and a second order tensor $S_1\in L^1(\hold,\R^{d\times d})$ such that for all $\bta \in \C^1_{\partial \hold}(\overline{\hold},\R^d)$, 
\begin{equation}
\label{eq:first_order_tensor}
d\mJ(\bfom)(\bta) = \int_\hold S_1 : D\bta +  \Sb_0\cdot \bta .
\end{equation}
\end{definition}
The following proposition extends the result \cite[Proposition 4.3]{MR3535238} to the multiphase case, also requiring weaker regularity assumptions.
\begin{proposition}\label{tensor_relations}
Assume $\bfom\in\pd$, $\bta \in \C^1_{\partial\hold}(\overline{\hold},\Rd)$, and $\mJ$ has a Eulerian shape derivative at $\bfom$ with the tensor representation \eqref{eq:first_order_tensor}.
If $S_1\in W^{1,1}(\Omega_k ,\R^{d\times d})$ for all  $k\in \K$, then
\begin{equation} \label{eq:equvilibrium_strong}
\begin{split}
\divv(S_1)  &= \Sb_0 \quad \text{ in } \Omega_k \text{ for all } k\in \K, \\
\end{split}
\end{equation}
and 
\begin{equation}\label{eq:first_order_tensor_2}
d\mJ(\bfom)(\bta) = \sum_{k\in\K}\int_{\Om_k} \divv(S_1^\transp\bta). 
\end{equation}
If in addition $\Om_k$ is Lipschitz for all $k\in \K$, then we have the boundary expression
\begin{equation}\label{158}
d\mJ(\bfom)(\bta) = \sum_{k\in\K}\int_{\partial \Omega_k} (S_{1,k} \bn_k) \cdot\bta . 
\end{equation}
where $S_{1,k}$ is the trace on $\partial \Omega_k$ of ${S_{1}}_{|{\Om_k}}$ and $\bn_k$ is the outward unit normal vector to $\Om_k$.
\end{proposition}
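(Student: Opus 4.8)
The plan is to follow the single-phase strategy of \cite[Proposition 4.3]{MR3535238}, but carried out phase by phase. I would split the proof into three stages: first derive the pointwise equilibrium \eqref{eq:equvilibrium_strong} by testing with fields supported in a single phase; then assemble \eqref{eq:first_order_tensor_2} from a Leibniz identity together with the fact that the interfaces carry no Lebesgue mass; and finally pass to the boundary form \eqref{158} via the divergence theorem on each Lipschitz phase.

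For the first stage, fix $k\in\K$ and consider $\bta\in\C^1_c(\Om_k,\R^d)$, extended by zero to $\overline{\hold}$; such a field lies in $\C^1_{\partial\hold}(\overline{\hold},\R^d)$, so the tensor representation \eqref{eq:first_order_tensor} applies. Since $\bta$ vanishes near $\partial\Om_k$ and outside $\Om_k$, the associated flow $\Tt$ is the identity on $\overline{\hold}\setminus\Om_k$ and maps $\Om_k$ onto itself; hence $\bfom_t=\bfom$ and, as already observed for flows leaving the domain invariant, $d\mJ(\bfom)(\bta)=0$. Feeding this into \eqref{eq:first_order_tensor} and using that $\bta$ is supported in $\Om_k$ gives $\int_{\Om_k}(S_1:D\bta+\Sb_0\cdot\bta)=0$. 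As $S_1\in W^{1,1}(\Om_k,\R^{d\times d})$ and $\bta$ has compact support in $\Om_k$, an integration by parts rewrites this as $\int_{\Om_k}(\Sb_0-\divv S_1)\cdot\bta=0$ for all such $\bta$, and the fundamental lemma of the calculus of variations yields $\divv S_1=\Sb_0$ a.e.\ in $\Om_k$, i.e.\ \eqref{eq:equvilibrium_strong}.

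For \eqref{eq:first_order_tensor_2} I would take an arbitrary $\bta\in\C^1_{\partial\hold}(\overline{\hold},\R^d)$ and use the Leibniz identity $\divv(S_1^\transp\bta)=S_1:D\bta+(\divv S_1)\cdot\bta$, valid a.e.\ in each $\Om_k$ because $S_1\in W^{1,1}(\Om_k)$. Substituting the equilibrium relation and summing the integrals over $k\in\K$ gives $\sum_{k\in\K}\int_{\Om_k}\divv(S_1^\transp\bta)=\int_{\bigcup_{k\in\K}\Om_k}(S_1:D\bta+\Sb_0\cdot\bta)$. Since $\bfom\in\pd$ and, by Theorem \ref{thm1}, each $\partial\Om_k$ has dimension at most $d-1$, the set $\hold\setminus\bigcup_{k\in\K}\Om_k$ is Lebesgue-null, so the last integral coincides with $\int_\hold(S_1:D\bta+\Sb_0\cdot\bta)=d\mJ(\bfom)(\bta)$ by \eqref{eq:first_order_tensor}, which is \eqref{eq:first_order_tensor_2}. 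When in addition each $\Om_k$ is Lipschitz, I would apply the Gauss--Green theorem to $S_1^\transp\bta\in W^{1,1}(\Om_k,\R^d)$ on $\Om_k$ and rewrite $(S_1^\transp\bta)\cdot\bn_k=(S_{1,k}\bn_k)\cdot\bta$ using the trace $S_{1,k}$; summing over $k$ then produces \eqref{158}.

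The algebraic manipulations are routine; the delicate point is the final stage, where the divergence theorem is invoked for a vector field that is only $W^{1,1}$ on a Lipschitz domain. This is exactly where both hypotheses are consumed: the Lipschitz regularity of $\Om_k$ guarantees that $\bn_k$ exists $\mathcal{H}^{d-1}$-almost everywhere and that the trace operator $W^{1,1}(\Om_k,\R^{d\times d})\to L^1(\partial\Om_k)$ is well defined, so that $S_{1,k}\bn_k\in L^1(\partial\Om_k)$ and the boundary integral in \eqref{158} is meaningful. I would therefore be most careful to state the Gauss--Green formula in the correct $W^{1,1}$--Lipschitz form rather than relying on a smooth-domain version.
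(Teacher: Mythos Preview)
Your three-stage argument is precisely the adaptation to the multiphase setting that the paper has in mind, and Stages~1 and~3 are correct as written. There is one slip in Stage~2: to pass from $\sum_{k\in\K}\int_{\Om_k}(S_1:D\bta+\Sb_0\cdot\bta)$ to $\int_\hold(S_1:D\bta+\Sb_0\cdot\bta)$ you need $\hold\setminus\bigcup_{k\in\K}\Om_k$ to be Lebesgue-null, and you justify this via Theorem~\ref{thm1}. But Theorem~\ref{thm1} applies only to partitions of the form $\bfom(\blsf)$ under a rank condition on $\blsf$, not to an arbitrary $\bfom\in\pd$ as assumed in the proposition; for general $\bfom\in\pd$ the boundaries $\partial\Om_k$ can have positive Lebesgue measure. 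The fix is to add the harmless hypothesis $|\partial\Om_k|=0$ for all $k\in\K$, which is automatic once each $\Om_k$ is Lipschitz (so \eqref{158} is unaffected) and is implicit in the single-phase result you are adapting; alternatively, simply note that $\hold\setminus\bigcup_{k\in\K}\Om_k\subset\bigcup_{k\in\K}\partial\Om_k$ and assume the latter is null.
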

\begin{proof}
The proof is a straightforward adaptation to the multiphase context  of the proof of \cite[Proposition 1]{laurain_2ndorder_shape_2019}.
\end{proof}
\section{The lower envelope method}\label{sec:HJ}
In this section the notation $\blsf = (\lsf_0,\lsf_1, \dots ,\lsf_{\ck-1})$ stands for a vector of time-dependent  functions  $\lsf_k\in \C^\infty([0,\tz]\times\R^d,\R)$.
For simplicity we will sometimes use the notation $\lsf_k(t) : = \lsf_k(t,\cdot)$ and $\blsf(t) = (\lsf_0(t),\lsf_1(t), \dots ,\lsf_{\ck-1}(t))$.
The time-dependent phases $\Om_k(\blsf(t)), k\in\K$, are defined as in \eqref{def:omk}, and 
the interfaces $\mE_{\I}(\blsf(t)),\mM_{\I}(\blsf(t))$ and $\widehat\blsf_{\I}(t)$ as in Definition \ref{def2}.

\subsection{Interface tracking using the lower envelope approach}\label{sec:3.2}
For $t\in [0,t_0]$ and $k\in\K$, let $\bx(t)\in \partial\Om_k(\blsf(t))\cap\hold$ be a moving interface point.
Suppose that for all $t\in [0,t_0]$,  $|D\widehat\blsf_{\I}(t)| > 0$ on $\mM_{\I}(\blsf(t))$ for all $\I\in\mathds{I}_k^2$.
Then for each $t\in [0,t_0]$, we can apply Lemma \ref{lemma01} which yields that
$\bx(t)\in \mE_{\I}(\blsf(t))$ for some $\I\in\mathds{I}_k^r$ with $2\leq r\leq \ck$.
We also assume that we can choose $\I$ independent of $t$, and that the trajectory of $\bx(t)$ can be described by a flow of the type \eqref{Vxt} for some $\bta\in \C^{1}_{\partial \hold}(\overline{\hold},\R^d)$.
In view of \eqref{eq:01} and \eqref{res05} we have $\mE_\I(\blsf(t)) \subset \mM_{\I}(\blsf(t))$, consequently $\bx(t)$ satisfies the $\card-1$ equations
\begin{equation}\label{eq:2_0}
\lsf_k(t,\bx(t)) = \lsf_\ell(t,\bx(t)),\ \mbox{ for all } \ell\in\I\setminus\{k\}.
\end{equation}
Differentiating each of these relations with respect to $t$ yields for $t\in [0,t_0]$:
\begin{equation}
\label{eq:2}
\partial_t (\lsf_k - \lsf_\ell)(t,\bx(t)) + \bta(\bx(t))\cdot \nabla ( \lsf_k - \lsf_\ell)(t,\bx(t)) 
= 0,\ \mbox{ for all } \ell\in\I\setminus\{k\}.
\end{equation}
We extend equations \eqref{eq:2} to $\hold$, this yields  
\begin{equation}
\label{eq:2ext}
\partial_t (\lsf_k - \lsf_\ell)(t,x) + \bta(x)\cdot \nabla ( \lsf_k - \lsf_\ell)(t,x) 
= 0, \ \mbox{ for all } \ell\in\I\setminus\{k\},
\end{equation}
for $t\in [0,t_0]$ and $x\in\hold$.

Now, assume that there exists $\wblsf\in \C^\infty([0,\tz]\times\R^d,\R^\ck)$ solution of
\begin{align}
\label{eq:2b}
\partial_t \wlsf_k(t,x) + \bta(x)\cdot \nabla \wlsf_k(t,x) 
& = 0, \ \mbox{ for all } k\in\K, t\in [0,t_0] \text{ and } x\in\hold,\\
\label{eq:2bb}\wlsf_k(0,x) & =  \lsf_k(0,x),
\end{align}
where $\wlsf_k$ are the entries of $\wblsf$.
Then, for any $\I\in\mathds{I}_k^r$ with $2\leq r\leq \ck$, we have in view of \eqref{eq:2b} that $\wlsf_k - \wlsf_\ell$ satisfies \eqref{eq:2ext} for all $\ell\in\I\setminus\{k\}$.
Therefore, for small $t_0$ the phases $\Om_k(\wblsf(t))$ are a first-order approximation of $\Om_k(\blsf(t))$ for all $k\in\K$ and $t\in[0,t_0]$.
Thus, we will use  the transport equations  \eqref{eq:2b}-\eqref{eq:2bb} as the foundation of the LEM described in Section \ref{sec:LEM}.

In view of Lemma \ref{lemma01}, $\partial\Om_k(\blsf(t))$ is the union of all the sets  $\mE_{\I}(\blsf(t))$ with $\I\in\mathds{I}_k^2$. 
In practice, it is common that these sets are non-empty, see Examples \ref{example1}, \ref{example2}, \ref{example3D} and Figure \ref{fig1}.
If this is the case, then to describe the evolution of $\partial\Om_k(\blsf(t))$ we need to solve equations \eqref{eq:2ext} at least for all $\I\in\mathds{I}_k^2$.
Thus, in general we need to solve  \eqref{eq:2ext} for all $\ell\in\K\setminus\{k\}$, i.e. for $\ck-1$ equations.
Note that \eqref{eq:2b}-\eqref{eq:2bb} actually consists of $\ck$ equations, but in practice we can take $\psi_0 \equiv 0$ without loss of generality of the method, so in fact \eqref{eq:2b}-\eqref{eq:2bb} can be reduced to $\ck-1$ equations.
This shows that solving the transport equations \eqref{eq:2b}-\eqref{eq:2bb} for all $k\in\K$ is de facto a natural way of tracking the motion of interface points using the lower envelope representation of multiphases.
\subsection{Reducing the dimension of velocity fields}
An interesting question which naturally arises is to determine whether one needs to use the full vector field $\bta$ in \eqref{eq:2b}, or if the components of $\bta$ that are orthogonal to $\nabla \wlsf_k$ are superfluous.
For instance in the level set method \cite{MR965860}, which can be seen as a special case of the LEM for two phases (see Section \ref{sec:2phases}), one uses only the normal component of $\bta$ since the gradient of the level set function is orthogonal to the tangential component of $\bta$, in the case of smooth domains.
In the multiphase context however, the situation is more complicated due to the nonsmoothness of the sets $\Om_k(\blsf)$.

We now discuss this issue in more details.
Suppose that the assumptions of Lemma \ref{lemma1} and Lemma \ref{lemma003} are satisfied for all $t\in [0,\tz]$, then we have $\dim(\mE_{\I}(\blsf(t)))\leq \max\{d-(\card-1),0\}$ for all $\I\subset\K$ with $\card\ge 2$.
Assume for simplicity that  $\dim(\mE_{\I}(\blsf(t)))= \max\{d-(\card-1),0\}$.
Observe that  $\card-1$ is equal to the codimension of $\mE_{\I}(\blsf(t))$ with respect to the ambient space $\R^d$
and consider the decomposition 
$$\bta(\bx(t)) = \bta_\tau(\bx(t)) + \bta_\perp(\bx(t)),$$
with $\bta_\tau(\bx(t)) \in T_{\bx(t)} \mE_{\I}(\blsf(t))$ and $\bta_\perp(\bx(t)) \in (T_{\bx(t)} \mE_{\I}(\blsf(t)))^\perp$, where $T_{\bx(t)} \mE_{\I}(\blsf(t))$ is the tangent space of $\mE_{\I}(\blsf(t))$ at $\bx(t)\in \mE_{\I}(\blsf(t))$ of dimension  $\max\{d-(\card-1),0\}$, and  $(T_{\bx(t)} \mE_{\I}(\blsf(t)))^\perp$ is its orthogonal complement in $\R^d$, of dimension $\min\{\card-1,d\}$.
Then, for $k\in\I$, in view of \eqref{eq:2_0} one observes that  $\nabla ( \lsf_k - \lsf_\ell)(t,\bx(t))\in (T_{\bx(t)} \mE_{\I}(\blsf(t)))^\perp$ for all $\ell\in\I\setminus\{k\}$, consequently equations \eqref{eq:2} become
\begin{equation}
\label{eq:2z}
\partial_t (\lsf_k - \lsf_\ell)(t,\bx(t)) + \bta_\perp(\bx(t))\cdot \nabla ( \lsf_k - \lsf_\ell)(t,\bx(t)) 
= 0, \ \mbox{ for all } \ell\in\I\setminus\{k\}.
\end{equation}
In the particular case $\I = \K = \{0,1\}$ and $\lsf_0\equiv 0$, which corresponds to the LSM, we have $\bta_\perp = (\bta\cdot \bn)\bn$ where $\bn$ is the outward unit normal vector to $\Om_0(\blsf)$. 
This corresponds to the standard simplification made in the LSM which yields the level set equation; see \cite{MR1700751}.

We may also relate this observation to the structure theorem  \cite[Corollary 5.6]{MR3593535}, where it is proved that the shape derivative of functionals taking smooth manifolds of dimension $d_0$ in $\R^d$ as argument only depends on the component $\bta_\perp$ of dimension $d-d_0$. 
Taking $d_0 = d-(\card-1)$ we arrive at the same conclusion, i.e. that it is sufficient to use $\bta_\perp$ to track the motion of $\mE_{\I}(\blsf(t))$, as in \eqref{eq:2z}.

We observe, however, that the dimension of  $(T_{\bx(t)} \mE_{\I}(\blsf(t)))^\perp$ depends on $\I$, and that in view of Lemma \ref{lemma01}, $\partial\Om_k(\blsf(t))$ is typically the union of sets $\mE_{\I}(\blsf(t))$ whose dimensions take all integer values between $0$ and $d-1$.
In particular,  when $\card\geq d+1$, then Lemma \ref{lemma003} indicates that $\mE_{\I}(\blsf(t))$ is a set of isolated points and $\dim(T_{\bx(t)} \mE_{\I}(\blsf(t)))^\perp=\min\{\card-1,d\}= d$, so that $\bta_\perp = \bta $ has dimension $d$.

In this case, one is constrained to use the full vector $\bta$ to describe the evolution of $\partial\Om_k(\blsf(t))$, at least locally around the sets $\mE_{\I}(\blsf(t))$ with zero dimension.
This shows that the lowest-dimensional subsets $\mE_{\I}(\blsf(t))$ of  $\partial\Om_k(\blsf(t))$ dictate the dimension of the vector field $\bta_\perp$ that should be used to track the motion of $\partial\Om_k(\blsf(t))$.
From  the point of view of numerical implementation, this is in accordance with the use of weak forms of shape derivatives such as \eqref{eq:first_order_tensor} or \eqref{158}, where the full vector $\bta$ is naturally available rather than  $\bta_\perp$.
This is a generalization of the idea used in \cite{MR3535238}, where the full vector $\bta$ was used in a distributed shape derivative-based level set method instead of the normal component $\bta\cdot\bn$ used in the LSM.

\subsection{Narrow band approach}
In the LSM, the level set equations can be solved in a small neighbourhood of the interface to decrease the computational cost, this is the so-called {\it narrow band} approach. 
In the case of the lower envelope method, one could also use the same idea and solve equations \eqref{eq:2b}-\eqref{eq:2bb} in a small neighbourhood of the union of all interfaces $\cup_{\I\in\mathds{I}^r_2,r\ge 2}\mE_{\I}(\blsf(t))$.
\subsection{Description of the lower envelope method}\label{sec:LEM}
We now have the theoretical foundation to describe the LEM.
Given $\blsf_0 \in \C^\infty(\R^d,\R^\ck)$ an initial vector-valued function, a vector field $\bta\in \C^1_{\partial\hold}(\overline{\hold},\Rd)$ and the associated flow
$\Tt^{\bta}:\overline{\hold}\rightarrow \Rd$, find $\blsf \in \C^\infty([0,\tz]\times\R^d,\R^\ck)$ solution of the transport equations  
\begin{align}
\label{eq_env0} \partial_t \lsf_{k}(t,x) + \bta(x)\cdot \nabla  \lsf_{k}(t,x) &= 0,\qquad  \mbox{ for } t\in [0,t_0] \mbox{ and }x\in\hold,\\
\label{eq_env1} \lsf_{k}(0,x) &= \lsf_{k,0}(x),
\end{align}
for all  $k\in\K$, where $\lsf_k,\lsf_{k,0}$ are the entries of $\blsf,\blsf_0$, respectively.
The moving vector domain is defined as $\bfom_t: = (\Om_0(\blsf(t)),\dots,\Om_{\ck-1}(\blsf(t)))$, where $\Om_k(\blsf(t))$ is defined as in \eqref{def:omk}.
Note that we can write \eqref{eq_env0}-\eqref{eq_env1} in an equivalent way in vectorial form as
\begin{align}
\label{eq_env0b} \partial_t \blsf(t,x) + D\blsf(t,x) \bta(x)  &= 0,\qquad \mbox{ for } t\in [0,t_0] \mbox{ and }x\in\hold,\\
\label{eq_env1b} \blsf(0,x) &= \blsf_{0}(x).
\end{align}
By analogy with the LSM, we call \eqref{eq_env0b}-\eqref{eq_env1b} the {\it lower envelope equation}.
If we assume that for all $t\in [0,t_0]$ we have  $|D\widehat\blsf_{\I}(t)| > 0$ on $\mM_{\I}(\blsf(t))$ for all $\I\in\mathds{I}_k^2$, then this guarantees that $\bfom_t\in\pd$  for all $t\in [0,t_0]$ in view of Theorem \ref{thm1}.

In a practical implementation, we may choose $\lsf_{0,0}\equiv 0$ which yields $\lsf_{0}(t)\equiv 0$ for all $t\in [0,t_0]$. 
This does not reduce the generality of the method and is less expensive from a computational point of view.
For shape optimization problems, $\bta$ is usually chosen as a descent direction for the multiphase cost functional $\mJ : \pd\rightarrow \R$, which can be obtained by solving an elliptic PDE using a weak form of the shape derivative on the right-hand side; see Section \ref{sec:algo} for more details on the procedure.

\subsection{The particular case of two phases}\label{sec:2phases}
In the case $\K =\{0,1\}$, $\lsf_1 \in \C^\infty(\R^d,\R)$ and $\lsf_0 \equiv 0$, we show that the LEM coincides with the LSM \cite{MR965860}. 
First of all, assuming $0$ is a regular value of $\lsf_1$, Definition \ref{def1} yields 
\begin{equation*}
\Om_1(\blsf) := \{x\in\hold\ |\ \lsf_1(x) <0  \},\quad \Om_0(\blsf) :=\{x\in\hold\ |\ \lsf_1(x) >0  \},
\end{equation*}
which corresponds to the definition of the domains in the level set method.

Then, the lower envelope equation \eqref{eq_env0b} reduces to the following transport equation
\begin{equation}\label{104}
\partial_t \lsf_1(t,x) + \bta(x)\cdot \nabla \lsf_1(t,x) 
= 0.
\end{equation}
Assuming $0$ is a regular value of $\lsf_1$, then $\Om_1(\blsf)$ is smooth and \eqref{104} reduces to the usual level set equation 
\begin{equation}
\partial_t \lsf_1(t,x) + \bta(x)\cdot\bn(x) |\nabla \lsf_1(t,x)| = 0.
\end{equation}
This shows that the LSM is a particular case of the LEM  using two phases.

\section{Geometric properties of the LEM}\label{sec:geo_lem}

\subsection{Properties of triple points in two dimensions}\label{sec:prop}

In this section we assume $d=2$, $\K=\{0,1,2\}$, $\blsf = (\lsf_0,\lsf_1,\lsf_2)\in \C^\infty(\R^2,\R^{3})$, and $\lsf_0 \equiv 0$. 
In this situation there are three interfaces $\mE_{\{0,1\}}(\blsf)$, $\mE_{\{1,2\}}(\blsf)$ and $\mE_{\{0,2\}}(\blsf)$ of dimension one, and assuming $\mE_\K(\blsf)$ is not empty, $\mE_\K(\blsf)$ is a set of triple points  according to Lemma \ref{lemma003} and Definition \ref{def3}; see Figure \ref{fig2} for an illustration.
Let $\hat{x}\in \mE_\K(\blsf)$ be a triple point.
Denote $\mathds{D}_\I$ the half-tangent to $\mE_\I(\blsf)$ at $\hat{x}$ for $\I =\{0,1\}$, $\I =\{1,2\}$ or $\I =\{0,2\}$.
Denote $\vartheta\in [0,2\pi]$ the angle in local polar coordinates with origin $\hat{x}$ and such that $\vartheta=0$ corresponds to  $\mathds{D}_{\{0,2\}}$.
Without loss of generality, we may assume that $\vartheta_0\leq \vartheta_1$, where $\vartheta_0$ is the angle  between $\mathds{D}_{\{0,2\}}$ and $\mathds{D}_{\{0,1\}}$ and $\vartheta_1$ is the angle between $\mathds{D}_{\{0,2\}}$ and $\mathds{D}_{\{1,2\}}$.
Indeed, if $\vartheta_0> \vartheta_1$ we can just exchange the indices of $\lsf_0$ and $\lsf_2$, rename the phases accordingly, and we will get $\vartheta_0\leq \vartheta_1$.
Introduce also the relative angles $\beta_0 = \vartheta_0\ge 0$, $\beta_1 = \vartheta_1 - \vartheta_0\ge  0$ and $\beta_2 = 2\pi - \vartheta_1\ge  0$.
Clearly, we have $\beta_0+\beta_1+\beta_2 = 2\pi$; see Figure \ref{fig2}.
\begin{thm}\label{thm:3}
Let $\hat{x}\in \mE_\K(\blsf)$ and assume $D\widehat\blsf_{\K}(\hat{x})$ is invertible, then $\max_{k\in\K} \beta_k <\pi$ and $\min_{k\in\K} \beta_k >0$. 
\end{thm}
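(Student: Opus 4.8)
The plan is to reduce the computation of the angles $\beta_k$ to the first-order behaviour of $\blsf$ at $\hat{x}$. I would write $a := \nabla\lsf_1(\hat{x})$ and $b := \nabla\lsf_2(\hat{x})$, and recall that since $\lsf_0\equiv 0$ and $\hat{x}\in\mE_\K(\blsf)$ we have $\lsf_1(\hat{x}) = \lsf_2(\hat{x}) = 0$. Because $\widehat\blsf_\K = (-\lsf_1,-\lsf_2)$, the assumption that $D\widehat\blsf_\K(\hat{x})$ is invertible is exactly the statement that $a$ and $b$ are linearly independent; in particular $a\neq 0$, $b\neq 0$ and $a - b\neq 0$. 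The three interfaces are locally contained in the zero sets of $\lsf_1$, $\lsf_2$ and $\lsf_1 - \lsf_2$, whose gradients at $\hat{x}$ are $a$, $b$ and $a-b$; since these are all nonzero, each interface is a smooth embedded curve near $\hat{x}$ and its half-tangent $\mathds{D}_\I$ is well defined.

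First I would linearize the phases at $\hat{x}$. Writing $y = x - \hat{x}$ and using $\lsf_k(\hat{x}+y) = \nabla\lsf_k(\hat{x})\cdot y + o(|y|)$, the defining inequalities \eqref{def:omk:alt} show that, to first order, $\Om_0(\blsf)$ is the sector $\{y : a\cdot y \ge 0,\ b\cdot y\ge 0\}$, $\Om_1(\blsf)$ the sector $\{y : a\cdot y\le 0,\ (a-b)\cdot y\le 0\}$, and $\Om_2(\blsf)$ the sector $\{y : b\cdot y\le 0,\ (b-a)\cdot y\le 0\}$. The boundary rays of these sectors are precisely the half-tangents $\mathds{D}_\I$, so $\beta_k$ is the opening angle of the $k$-th sector. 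The key elementary fact I would then invoke is that for two linearly independent vectors $u,v$ the sector $\{y : u\cdot y\ge 0,\ v\cdot y\ge 0\}$ has opening angle $\pi - \angle(u,v)$, where $\angle(u,v)\in(0,\pi)$ denotes the unsigned angle between $u$ and $v$.

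Applying this gives $\beta_0 = \pi - \angle(a,b)$, $\beta_1 = \pi - \angle(-a,\,b-a)$ and $\beta_2 = \pi - \angle(-b,\,a-b)$. Each of the pairs $(a,b)$, $(-a,b-a)$ and $(-b,a-b)$ is linearly independent precisely because $a,b$ are, so every angle $\angle(\cdot,\cdot)$ lies strictly in $(0,\pi)$, whence each $\beta_k\in(0,\pi)$. This yields both $\max_{k\in\K}\beta_k<\pi$ and $\min_{k\in\K}\beta_k>0$ at once. As a consistency check one verifies $\beta_0+\beta_1+\beta_2 = 2\pi$, matching the relation already noted before the statement.

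The main obstacle I anticipate is not the final angle arithmetic, which is elementary, but the careful justification that the half-tangents $\mathds{D}_\I$ — and hence the angles $\beta_k$ defined through them — are governed entirely by the first-order data $a,b$, together with the correct sign bookkeeping that matches each linearized sector to the right phase $\Om_k$. This requires checking that along each interface curve the sign of the omitted third function selects the correct half-tangent ray (for example, along $\{\lsf_1 = 0\}$ the interface $\mE_{\{0,1\}}(\blsf)$ is the branch on which $\lsf_2\ge 0$, i.e.\ $b\cdot y\ge 0$ to leading order), which is where linear independence of $a,b$ is used again to ensure the selected ray is unambiguous.
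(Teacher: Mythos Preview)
Your argument is correct and follows a genuinely different route from the paper's proof. The paper proceeds by contradiction: assuming $\beta_0>\pi$, it chooses coordinates so that $\mathds{D}_{\{0,2\}}$ is the positive $x$-axis, then tracks in which open half-plane each of $\nabla\lsf_2(\hat{x})$, $\nabla\lsf_1(\hat{x})$ and $\nabla(\lsf_2-\lsf_1)(\hat{x})$ must lie, arriving at the impossible conclusion $\nabla(\lsf_2-\lsf_1)(\hat{x})\in\mathds{H}_r\cap\mathds{H}_l=\emptyset$. The borderline case $\beta_0=\pi$ is handled separately (it forces $\nabla\lsf_1(\hat{x})$ and $\nabla\lsf_2(\hat{x})$ to be collinear, contradicting invertibility of $D\widehat\blsf_\K(\hat{x})$), and the lower bound $\min_k\beta_k>0$ is deduced only at the end from $\beta_0+\beta_1+\beta_2=2\pi$.

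Your approach is instead a direct computation: you linearize at $\hat{x}$, identify each phase with a sector bounded by a pair of linearly independent normals, and use the elementary identity that such a sector has opening $\pi-\angle(u,v)\in(0,\pi)$. This yields both bounds simultaneously and in fact produces the explicit formulas $\beta_0=\pi-\angle(a,b)$, $\beta_1=\pi-\angle(-a,b-a)$, $\beta_2=\pi-\angle(-b,a-b)$, which are equivalent to the expressions the paper derives afterwards in its Proposition on computing the $\beta_k$. Your consistency check $\sum_k\beta_k=2\pi$ amounts to recognizing $\angle(a,b)+\angle(-a,b-a)+\angle(-b,a-b)$ as the angle sum of the triangle with vertices $0,a,b$. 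The trade-off is that the paper's argument is purely qualitative and avoids committing to explicit angle formulas, whereas yours is more computational but more informative; the point you flag yourself---matching the correct half-tangent ray to each interface via the sign of the third function---is indeed the only place requiring care, and your outline of it is sound.
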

\begin{proof}
First we assume that $\beta_0>\pi$ and show that this leads to a contradiction.
Without loss of generality we may assume that $\mathds{D}_{\{0,2\}}$ coincides with the right semiaxis $Ox$.
Since $\beta_0>\pi$, $\mathds{D}_{\{0,1\}}$ and $\mathds{D}_{\{1,2\}}$ must be both located in the open lower half-plane.

Denote $\mathds{H}_r := \{(x,y)\ |\ x>0\}$ the right open half-plane and $\mathds{H}_l := \{(x,y)\ |\  x<0\}$ the left open half-plane.
For $x\in\mE_{\{0,2\}}(\blsf)$ we have $\nabla_\Gamma\lsf_2(x) = 0$ since $\lsf_2 = \lsf_0$ on $\mE_{\{0,2\}}(\blsf)$ and $\lsf_0\equiv 0$, where $\nabla_\Gamma$ denotes the tangential gradient  on $\mE_{\{0,2\}}(\blsf)$.
We also have $\nabla\lsf_2(x)\cdot\bn_2(x) > 0$ for all $x\in\mE_{\{0,2\}}(\blsf)$, where $\bn_2$ is the unit outward normal vector to $\Om_2(\blsf)$, since $\lsf_2\leq\lsf_0$ in $\Om_2(\blsf)$ and $\lsf_2\geq\lsf_0$ in $\Om_0(\blsf)$.  
As $\mathds{D}_{\{0,2\}}$ coincides with the right semiaxis $Ox$, we get $\nabla\lsf_2(\hat{x}) = (0,\lambda)$ with $\lambda>0$.
In a similar way we have that $\nabla\lsf_1(\hat{x})$ is orthogonal to $\mathds{D}_{\{0,1\}}$ and $-\nabla\lsf_1(\hat{x})\in\mathds{H}_r$, using the fact  that $\nabla\lsf_1(\hat{x}) \neq 0$, thanks to the assumption that $D\widehat\blsf_{\K}(\hat{x})$ is invertible.
Thus, we have shown that $\nabla (\lsf_2 - \lsf_1)(\hat{x}) \in\mathds{H}_r$.

In a similar way we have that $\nabla(\lsf_2 -\lsf_1)(\hat{x})$ is orthogonal to $\mathds{D}_{\{1,2\}}$.
The fact that $\lsf_2-\lsf_1 \geq 0$ in $\Om_1$ and  $\lsf_2-\lsf_1 \leq 0$ in $\Om_2$ shows that  $\nabla(\lsf_2 -\lsf_1)(\hat{x})$ is pointing outward of $\Om_2$, therefore it must be in $\mathds{H}_l$.
Thus, we have obtained $\nabla (\lsf_2 - \lsf_1)(\hat{x}) \in\mathds{H}_r\cap \mathds{H}_l$ which is a contradiction since $\mathds{H}_r \cap \mathds{H}_l =\emptyset$, and this implies that $\beta_0\leq \pi$. 
In a similar way, one also proves $\beta_k\leq \pi$ for $k=1,2$.

Now assume that $\beta_0 = \pi$, then $\nabla\lsf_2(\hat{x})$ and $\nabla\lsf_1(\hat{x})$ are linearly dependent which implies $\det D\widehat\blsf_{\K}(\hat{x})=0$, and this contradicts the assumption that  $D\widehat\blsf_{\K}(\hat{x})$ be invertible.
Hence, we must have $\beta_0 < \pi$ and also $\beta_1 < \pi$, $\beta_2 < \pi$ in a similar way.
Then, the fact that  $\min_{k\in\K} \beta_k >0$ is a straightforward consequence of  $\beta_0+\beta_1+\beta_2 = 2\pi$.
\end{proof}
\begin{figure}
\begin{center}
\begin{tikzpicture}[scale=8.0]
\draw[thick]  (0,0) --(1,0) --(1,1)--(0,1)--(0,0);
\draw[dashed,thick]  (0,0)--(0.5,0.5);
\draw[dashed,thick]  (0,1)--(0.5,0.5);
\draw[dashed,thick]  (0.5,0.5)--(1,0.5);
\node[text=black] at (0.9,0.9){$\Om_0(\lsf)$};
\node[text=black] at (0.9,0.1){$\Om_2(\lsf)$};
\node[text=black] at (0.1,0.55){$\Om_1(\lsf)$};
\node[text=black] at (0.15,0.35){$\mE_{\{1,2\}}(\blsf)$};
\node[text=black] at (0.4,0.8){$\mE_{\{0,1\}}(\blsf)$};
\node[text=black] at (0.85,0.6){$\mE_{\{0,2\}}(\blsf)$};
\node[text=black] at (0.85,0.45){$\mathds{D}_{\{0,2\}}$};
\node[text=black] at (0.25,0.1){$\mathds{D}_{\{1,2\}}$};
\node[text=black] at (0.15,0.75){$\mathds{D}_{\{0,1\}}$};
\draw [black,->,thick,domain=10:125] plot ({0.5+0.1*cos(\x)}, {0.5+0.1*sin(\x)});
\draw [black,->,thick,domain=-125:-10] plot ({0.5+0.1*cos(\x)}, {0.5+0.1*sin(\x)});
\draw [black,->,thick,domain=145:215] plot ({0.5+0.1*cos(\x)}, {0.5+0.1*sin(\x)});
\node[text=black] at (0.55,0.35){$\beta_2$};
\node[text=black] at (0.55,0.65){$\beta_0$};
\node[text=black] at (0.35,0.5){$\beta_1$};
\node[text=black] at (0.52,0.54){$\hat{x}$};
\draw[thick,smooth,samples=100,domain=0.5:1] plot(\x,{0.485+0.06*\x*\x});
\draw[thick,smooth,samples=100,domain=0:0.5] plot(\x,{0.25+\x*\x});
\draw[thick,smooth,samples=100,domain=0.1:0.5] plot(\x,{15*\x*\x/24 - 39*\x/24 + 27.75/24});
\end{tikzpicture}
\caption{The sets $\hold = (0,1)^2$ and $\mE_{\I}(\blsf)$, half-tangents $\mathds{D}_{\I}$  for $\I = \{0,1\},\{0,2\},\{1,2\}$  and angles $\beta_0,\beta_1,\beta_2$. }\label{fig2}
\end{center}
\end{figure}

We can also compute the angles at the triple point $\hat{x}$ in the following way.
\begin{proposition}
Let $\hat{x}\in \mE_\K(\blsf)$ and assume $D\widehat\blsf_{\K}(\hat{x})$ is invertible, then 
\begin{equation}\label{betak}
\beta_k = \arccos \frac{\nabla (\lsf_{[k+1]_3} - \lsf_k)\cdot \nabla (\lsf_k - \lsf_{[k+2]_3})}{|\nabla (\lsf_{[k+1]_3} - \lsf_k)|\cdot | \nabla (\lsf_k - \lsf_{[k+2]_3})|}\ \text{ for all } k\in\K,
\end{equation}
where $[k+1]_3$ means $k+1$ modulo $3$.
\end{proposition}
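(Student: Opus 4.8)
The plan is to compute each angle $\beta_k$ as the interior angle of the phase $\Om_k(\blsf)$ at the triple point $\hat{x}$, and to express this angle via the two gradient vectors that are normal to the two interfaces bounding $\Om_k(\blsf)$. The key geometric fact, already established in the proof of Theorem \ref{thm:3}, is that at a point of the interface $\mE_{\{k,\ell\}}(\blsf)$ the vector $\nabla(\lsf_k - \lsf_\ell)(\hat{x})$ is orthogonal to the half-tangent $\mathds{D}_{\{k,\ell\}}$, because $\lsf_k = \lsf_\ell$ along that interface forces the tangential derivative of $\lsf_k - \lsf_\ell$ to vanish. Thus for the phase $\Om_k(\blsf)$, whose two bounding interfaces are $\mE_{\{k,[k+1]_3\}}(\blsf)$ and $\mE_{\{k,[k+2]_3\}}(\blsf)$, the relevant normal directions are $\nabla(\lsf_{[k+1]_3} - \lsf_k)$ and $\nabla(\lsf_k - \lsf_{[k+2]_3})$ evaluated at $\hat{x}$.

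First I would fix $k\in\K$ and identify the two half-tangents $\mathds{D}_{\{k,[k+1]_3\}}$ and $\mathds{D}_{\{k,[k+2]_3\}}$ as the edges of $\Om_k(\blsf)$ meeting at $\hat{x}$, so that $\beta_k$ is precisely the opening angle of $\Om_k(\blsf)$ between these two half-tangents. Next, using the orthogonality established above, each half-tangent is the orthogonal complement (in $\R^2$) of the corresponding gradient vector. The crucial point is to pin down the \emph{orientation} of these gradients: as in the proof of Theorem \ref{thm:3}, the sign conditions $\lsf_k \leq \lsf_\ell$ in $\Om_k$ and $\lsf_k \geq \lsf_\ell$ outside show that $\nabla(\lsf_\ell - \lsf_k)(\hat{x})$ points outward from $\Om_k(\blsf)$ across the interface $\mE_{\{k,\ell\}}(\blsf)$. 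Consequently $\nabla(\lsf_{[k+1]_3} - \lsf_k)(\hat{x})$ and $\nabla(\lsf_k - \lsf_{[k+2]_3})(\hat{x}) = -\nabla(\lsf_{[k+2]_3} - \lsf_k)(\hat{x})$ are, respectively, the outward and inward normals, arranged so that the angle \emph{between these two vectors} equals the interior angle $\beta_k$ of $\Om_k(\blsf)$ rather than its supplement. The formula \eqref{betak} then follows immediately from the standard relation $\cos\beta_k = (a\cdot b)/(|a|\,|b|)$ applied to $a = \nabla(\lsf_{[k+1]_3} - \lsf_k)(\hat{x})$ and $b = \nabla(\lsf_k - \lsf_{[k+2]_3})(\hat{x})$.

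The main obstacle, and the step deserving the most care, is verifying that the particular sign convention in \eqref{betak} produces the interior angle and not its supplement $\pi - \beta_k$. This hinges on checking that exactly one of the two gradients appearing in the numerator points out of $\Om_k$ while the other points into it, which is why the indices are arranged as $\lsf_{[k+1]_3} - \lsf_k$ for one and $\lsf_k - \lsf_{[k+2]_3}$ for the other: the two differences are set up with opposite sign patterns relative to the phase $\Om_k$. I would make this rigorous by reusing the half-plane argument from the proof of Theorem \ref{thm:3}, which already records that $\nabla(\lsf_\ell - \lsf_k)(\hat{x})$ is the outward direction across $\mE_{\{k,\ell\}}(\blsf)$; combining the two such statements for $\ell = [k+1]_3$ and $\ell = [k+2]_3$ fixes the geometry unambiguously. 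The invertibility of $D\widehat\blsf_{\K}(\hat{x})$ guarantees that both gradient vectors are nonzero and non-collinear, so the $\arccos$ is well-defined and yields an angle strictly between $0$ and $\pi$, consistent with Theorem \ref{thm:3}.
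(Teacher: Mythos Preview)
Your approach is essentially the same as the paper's: both arguments use that $\nabla(\lsf_k-\lsf_\ell)$ is orthogonal to $\mathds{D}_{\{k,\ell\}}$, determine the orientation of each gradient relative to the adjacent phases, conclude that the angle between the two chosen gradients equals $\beta_k$, and invoke Theorem~\ref{thm:3} to justify taking $\arccos$.

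One slip in your write-up: from $\lsf_k\le\lsf_\ell$ in $\Om_k$ and $\lsf_k\ge\lsf_\ell$ in $\Om_\ell$ one gets that $\lsf_\ell-\lsf_k$ is nonnegative in $\Om_k$ and nonpositive in $\Om_\ell$, so $\nabla(\lsf_\ell-\lsf_k)(\hat{x})$ points \emph{into} $\Om_k$ (outward from $\Om_\ell$), not outward from $\Om_k$ as you state. This reverses your labeling of which of the two vectors in \eqref{betak} is inward and which is outward relative to $\Om_k$, but the conclusion ``one points in, one points out, hence the angle between them is $\beta_k$ rather than $\pi-\beta_k$'' is unaffected. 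The paper phrases the orientations as ``$\nabla(\lsf_2-\lsf_1)$ points outward of $\Om_2$'' and ``$\nabla(\lsf_1-\lsf_0)$ points outward of $\Om_1$'', which is the correct version of what you intended.
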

\begin{proof}
The vector $\nabla(\lsf_2 - \lsf_1)$ is orthogonal to $\mathds{D}_{\{1,2\}}$ and points outward of $\Om_2(\blsf)$, while
the vector $\nabla(\lsf_1 - \lsf_0)$ is orthogonal to $\mathds{D}_{\{0,1\}}$ and points outward of $\Om_1(\blsf)$.
Hence,  $\beta_1$ is also the angle between $\nabla(\lsf_2 - \lsf_1)$ and $\nabla(\lsf_1 - \lsf_0)$, and since $0<\beta_1<\pi$ according to Theorem \ref{thm:3}, this yields \eqref{betak} for $k=1$. 
The other cases are obtained in the same way. 
\end{proof}

\subsection{Evolution of $(d+1)$-tuple points}\label{sec:evol_triple}

We have formally shown in Section \ref{sec:3.2} that the  lower envelope equation \eqref{eq_env0b}-\eqref{eq_env1b} represents a first-order approximation of the motion of  interfaces  $\partial\Om_k(\blsf(t))$ for all $k\in\K$.
Nevertheless, we would like to verify that the  lower envelope equation \eqref{eq_env0b}-\eqref{eq_env1b} indeed leads to the motion of $(d+1)$-tuple points with the expected velocity $\bta$ in a neighbourhood of $t=0$.
The main tool to achieve this result is the implicit function theorem.

Suppose  $\I=\K$,  $\ck = d+1$, and $D\widehat\blsf_{\I}(0,x)$ is invertible for all $x\in \mM_\I(\blsf(0))$, then  $\mE_\I(\blsf(0))=\mM_\I(\blsf(0))$ is a set of isolated points  in view of Lemma \ref{lemma003}.
Without loss of generality, we can assume that $\mM_\I(\blsf(0)) = \{\hat{x}\}$ is exactly one point. 
Then we would like to determine the behaviour of $\mM_\I(\blsf(t))$ for small $t$.
In view of \eqref{eq:malpha0} we have
$$\mM_{\I}(\blsf(0)) = \{x\in\overline{\hold}\ |\ \widehat\blsf_\I(0,x) =0\} = \{\hat{x}\}.$$
Using that $D\widehat\blsf_\I(0,x)$ is invertible for all $x\in \mM_\I(\blsf(0))$, and possibly reducing $t_0$,  the implicit function theorem yields the existence of a unique function $\bx^\dagger: [0,t_0]\to \R^d$ such that $\bx^\dagger(0) =\hat{x}$ and 
$\widehat\blsf_\I(t,\bx^\dagger(t)) =0$ for all $t\in [0,t_0]$.
Thus, we get
\begin{equation}\label{eq:34}
\bx^\dagger(t) \in \mM_{\I}(\blsf(t)) = \{x\in\overline{\hold}\ |\ \widehat\blsf_\I(t,x) =0\}.
\end{equation}
Reducing $t_0$ if necessary, we also have that $D\widehat\blsf_\I(t,x)$ is invertible for all $x\in \mM_\I(\blsf(t))$ and all $t\in [0,t_0]$.
Thus, applying Lemma \ref{lemma003} using $\I=\K$ and  $\ck = d+1$, we have $\mE_{\I}(\blsf(t))=\mM_{\I}(\blsf(t))$ and \eqref{eq:34} yields that $\bx^\dagger(t)$ is a $(d+1)$-tuple point for all $t\in [0,t_0]$.
The implicit function theorem also yields for the derivative
\begin{equation}\label{eq:77}
\partial_t \widehat\blsf_\I(t,\bx^\dagger(t)) + D\widehat\blsf_\I(t,\bx^\dagger(t)) \partial_t\bx^\dagger(t)  = 0,\quad \mbox{ for } t\in [0,t_0]. 
\end{equation}
Taking the difference between the equations for $\lsf_k$ and $\lsf_\ell$ at  $x = \bx^\dagger(t)$ in \eqref{eq_env0}, 
and subtracting the result to  \eqref{eq:77} yields
\begin{equation*}
D\widehat\blsf_\I(t,\bx^\dagger(t)) (\partial_t\bx^\dagger(t) - \bta (\bx^\dagger(t)) ) = 0,\quad \mbox{ for } t\in [0,t_0]. 
\end{equation*}
Using that $D\widehat\blsf_\I(t,\bx^\dagger(t))$ is invertible for $t\in [0,t_0]$ we get 
$$\partial_t \bx^\dagger(t) = \bta (\bx^\dagger(t)) ,\quad \mbox{ for } t\in [0,t_0].$$
This shows that the lower envelope equation \eqref{eq_env0b}-\eqref{eq_env1b} leads to the  evolution of the $(d+1)$-tuple point $\bx^\dagger(t)$ with the expected velocity  $ \bta (\bx^\dagger(t))$ for sufficiently small time $t\in [0,t_0]$. 

Now we consider the case $\ck > d+1$.
Suppose that $\{\hat{x}\}\in\mM_\K(\blsf(0))$, then $\hat{x}$ is at the junction of $\ck > d+1$ phases. 
Then, there exists at least two different subsets $\I^0\subset\K$ and  $\I^1\subset\K$ with $|\I^0| = |\I^1| =d+1$ such that $\hat{x} \in\mM_{\I^0}(\blsf(0))$ and $\hat{x} \in\mM_{\I^1}(\blsf(0))$.
We can then proceed with the same reasoning as above, except that we only have the inclusions $\mE_{\I^0}(\blsf(0))\subset\mM_{\I^0}(\blsf(0))$ and $\mE_{\I^1}(\blsf(0))\subset\mM_{\I^1}(\blsf(0))$ instead of equalities.
On one hand, this means that  $\mE_{\I^0}(\blsf(t))$ and $\mE_{\I^1}(\blsf(t))$ could be empty for $t>0$.
On the other hand, even if we assume that both sets are non-empty for all $t\in [0,\tz]$, possibly reducing $\tz$, we obtain two functions $\bx^\dagger_0: [0,t_0]\to \R^d$ and $\bx^\dagger_1: [0,t_0]\to \R^d$ that both satisfy $\partial_t \bx^\dagger_i(t) = \bta (\bx^\dagger_i(t))$, $i=0,1,$ but are not necessarily equal.

We conclude that $(d+1)$-tuple points are stable in the case $\I=\K$ and $\ck = d+1$, while multiple junctions are unstable for  $\ck > d+1$ in the sense that they can split and move in different directions for $t>0$.

\section{Application to an inverse conductivity problem}\label{sec:EIT}
\subsection{Problem formulation}
We consider the inverse problem of determining a matrix-valued conductivity $\sigma$ of a body $\hold\subset\R^d$ satisfying
the elliptic equations 
\begin{equation}\label{eq:eit}
\divv(\sigma \nabla u_i) = f\mbox{ in }\hold,
\end{equation}
where $u_i$, $i=1,\dots,m$ are the potentials associated with applied boundary current fluxes $g_i=\sigma \nabla u_i\cdot\bn|_{\Sigma}$, and measurements of boundary voltages $\meas_i=u_i|_\Sigma$ are available on an open subset $\Sigma$ of $\partial\Om$. 

When $f\equiv0$, this problem is known as the continuum model in electrical impedance tomography (EIT), also known as the Calder\'on problem; we refer to the reviews \cite{Bera_2018,MR1955896} and the references therein.
There exists a vast literature on EIT in the isotropic case, which corresponds to $\sigma = \gamma I_d$, where $I_d$ is the identity matrix and $\gamma$ is a scalar-valued function, but there are much less known results in the anisotropic case; however one should mention \cite{MR3810174} for uniqueness results in the case of a layered anisotropic medium.
Here, we compute the shape derivative in the multiphase anisotropic case, and for the numerics we focus on the isotropic case.

Introduce 
$$\sigma=\sigma_{\bfom} := \sum_{k\in\K} \sigma_k\chi_{\Om_k}\ \text{ and }\  f=f_{\bfom} := \sum_{k\in\K} f_k\chi_{\Om_k},$$ 
where  $\chi_{\Omega_k}$ denotes the characteristic function of $\Omega_k$, $\sigma_k$ are matrix-valued functions and $\bfom\in\pd$; see Definition \ref{def4}.
\begin{assump}\label{assump1}
We make the following assumptions on the material parameters for all $k\in\K$:
\begin{itemize}
\item $\bfom\in\pd$,
\item $\sigma_k:\overline{\hold}\to\R^{d\times d}$ is assumed to be $\C^1(\overline{\hold})$ and uniformly positive definite, i.e., there exists $\underline{\sigma}$ (independent of $k$) such that
$ \xi^\transp\sigma_k(x)\xi \geq \underline{\sigma} |\xi|^2$  for a.e.  $x\in\overline{\hold} \text{ and all }\xi\in\R^d$,
\item $\sigma_k \not\equiv \sigma_\ell$ for all $k\neq \ell$,
\item $f_k\in H^1(\hold)$. 
\end{itemize}
\end{assump}
In order to obtain a numerical approximation of the solution of the EIT problem, we consider a Kohn-Vogelius approach with mixed boundary conditions as in \cite{MR3535238}.
For $i=1,\dots,m$, introduce $\ua_i\in H^{1}_{a,h}(\hold)$ and $\ub_i\in H^{1}_{b,h}(\hold)$ solutions of
\begin{align}\label{E:var_form1}
\int_{\hold} \sigma_{\bfom} \nabla \ua_i \cdot \nabla \test &= \int_\hold f_{\bfom} \test + \int_{\Gamma_b} g_i\test  \quad \mbox{ for all }\test\in H^{1}_{a,0}(\hold), \\
\label{E:var_form2} \int_{\hold} \sigma_{\bfom} \nabla \ub_i \cdot \nabla \test &= \int_\hold f_{\bfom} \test + \int_{\Gamma_a} g_i\test  \quad \mbox{ for all }\test\in H^{1}_{b,0}(\hold), 
\end{align}
with $\overline{\Gamma_a}\cup\overline{\Gamma_b} = \partial\hold$, $\Gamma_a\neq\emptyset$, $\Gamma_b\neq\emptyset$,  $g_i\in H^{-1/2}(\Gamma)$, $h_i\in H^{1/2}(\Gamma)$ and
\begin{align*}
H^{1}_{a,h}(\hold) & := \{\test\in H^1(\hold)\ |\ \test = h_i\text{ on } \Gamma_a  \},\\
H^{1}_{b,h}(\hold) & := \{\test\in H^1(\hold)\ |\ \test = h_i\text{ on } \Gamma_b  \}.
\end{align*}
The inverse problem then consists in finding $\sigma_{\bfom}$ such that $\ua_i=\ub_i$ for all $i=1,\dots,m$.
However,  the measurements $h_i$ are corrupted by noise in practice, therefore
we cannot expect that  $\ua_i = \ub_i$ be exactly achievable, but rather that $|\ua_i - \ub_i|$ should be minimized.
Thus, we shall consider the following multiphase cost functional:
\begin{align}
\label{eit3.2} \mJ(\bfom) & := \frac{1}{2}\sum_{i=1}^m\int_{\hold} (\ua_i -  \ub_i)^2.
\end{align}

\subsection{Shape derivative of the cost functional}
In this section we take $m=1$  and we write $\ua,\ub,g,\meas$ instead of $\ua_1,\ub_1,g_1,\meas_1$ to simplify the notation.
The expression of the shape derivative in the  case $m>1$ can be obtained straightforwardly by summing over $i=1,\dots,m$.

Before stating the main result of this section, a short discussion about third-order tensors is useful. 
During the calculation of the shape derivative of $\mJ(\bfom)$ appears the term 
$$\widetilde\sigma(t): =  \sum_{k\in\K} \chi_{\Om_k}\sigma_k\circ\Tt$$
whose derivative is given by
$$\widetilde\sigma'(0)=  \sum_{k\in\K} \chi_{\Om_k} D\sigma_k\bta .$$
Here, $D\sigma_k:\hold\to \R^{d\times d\times d}$ is a third-order tensor with entries $(\partial_\ell (\sigma_k)_{ij})_{ij\ell}$.
Let  $\mathds{A}\in\R^{d\times d\times d}$ and $\mathds{B}\in\R^{d\times d\times d}$ be two third-order tensors satisfying
$$ \mathds{A} y z\cdot x = \mathds{B} z x\cdot y \quad\text{ for all } x, y,z\in\R^d.$$
Then we call $\mathds{B}$ the transpose of $\mathds{A}$ and we write $\mathds{B} = \mathds{A}^\transp$.  
It can be shown that the transpose of $\mathds{A}$ always exists and is unique; see
\cite[Proposition 3.1]{qi2017transposes}.

For instance, the term  $D\sigma_k^\transp \nabla \ua \nabla \pa \cdot\bta$ appearing in  $\Sb_0(\bfom)\cdot \bta$ in \eqref{T:tensor_shape_deriv}
can be computed as follows: $D\sigma_k^\transp \nabla \ua \nabla \pa \cdot\bta = D\sigma_k \bta\nabla \ua \cdot \nabla \pa  = \sum_{i,j,\ell = 1}^d \partial_\ell (\sigma_k)_{ij} \bta_\ell\partial_j\ua\partial_i\pa$, which means that  $D\sigma_k^\transp \nabla \ua \nabla \pa$ is a vector with entries $(\sum_{i,j = 1}^d \partial_\ell (\sigma_k)_{ij} \partial_j\ua\partial_i\pa)_\ell$.

\begin{thm}[distributed shape derivative]\label{T:shape}
Let Assumption \ref{assump1} be satisfied, then the shape derivative of $\mJ$ at $\bfom$ in direction $\bta\in\C^1_{\partial\hold}(\overline{\hold},\R^d)$
is given by 
\begin{align}
\label{T:tensor_shape_deriv}
d\mJ(\bfom)(\bta)  = \int_{\hold} S_1(\bfom): D \bta  +  \Sb_0(\bfom)\cdot  \bta  \;dx,
\end{align}
where $S_1(\bfom)\in L^1(\hold,\R^{d\times d})$ and $\Sb_0(\bfom)\in L^1(\hold,\R^d)$ are defined by 
\begin{align}
\label{S1_1} S_1(\bfom) & = \left[\frac{1}{2}(\ua-\ub)^2 
-f_{\bfom}(\pa +\pb)
+  \sigma_{\bfom}\nabla \ua\cdot\nabla \pa + \sigma_{\bfom}\nabla \ub\cdot\nabla \pb 
\right]  I_d
\\
\notag&\quad 
- \nabla \pa\otimes \sigma_{\bfom}\nabla \ua 
- \nabla \ua\otimes \sigma_{\bfom}^\transp\nabla \pa 
- \nabla \pb\otimes \sigma_{\bfom}\nabla \ub 
- \nabla \ub\otimes \sigma_{\bfom}^\transp\nabla \pb,\\
\label{S0} \Sb_0(\bfom) & =  \sum_{k\in\K} \chi_{\Om_k} [D\sigma_k^\transp \nabla \ua \nabla \pa  + D\sigma_k^\transp \nabla \ub \nabla \pb 
-(p+q){\nabla} f_k ],  
\end{align}
where  $D\sigma_k^\transp $ denotes the transpose of the third-order tensor  $D\sigma_k:\hold\to \R^{d\times d\times d}$.

The adjoints $\pa\in H^1_{a,0}(\hold)$ and $\pb\in H^1_{b,0}(\hold)$ are solutions of
\begin{align}
\label{57a}\int_\hold \sigma_{\bfom}^\transp \nabla \pa \cdot \nabla \test &= -\int_{\hold} (\ua - \ub) \test\quad \mbox{ for all } \test\in H^1_{a,0}(\hold),\\
\label{57b}\int_\hold \sigma_{\bfom}^\transp \nabla \pb \cdot \nabla \test &= \int_{\hold} (\ua - \ub) \test\quad \mbox{ for all } \test\in H^1_{b,0}(\hold).
\end{align}
\end{thm}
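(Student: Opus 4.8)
The plan is to compute $d\mJ(\bfom)(\bta)$ with the averaged adjoint (Lagrangian) method, after transporting the perturbed problem on $\bfom_t=\Tt(\bfom)$ back to the fixed configuration $\bfom$ via the change of variables $x=\Tt(y)$. Writing $u^t:=\ua(t)\circ\Tt$, $v^t:=\ub(t)\circ\Tt$ and using $\nabla_x w=D\Tt^{-\transp}\nabla_y(w\circ\Tt)$ together with $dx=\xi(t)\,dy$, $\xi(t):=\det D\Tt$, the crucial point is that $\sigma_{\bfom_t}\circ\Tt=\widetilde\sigma(t)$ and $f_{\bfom_t}\circ\Tt=\widetilde f(t):=\sum_{k\in\K}\chi_{\Om_k}f_k\circ\Tt$, so that after pullback all $t$-dependence is carried by smooth coefficients on the fixed domain $\hold$. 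I would then introduce the parametrized Lagrangian
$$G(t,u,v,p,q):=\tfrac12\int_\hold (u-v)^2\,\xi(t)+a_t(u,p)-\ell^a_t(p)+a_t(v,q)-\ell^b_t(q),$$
with $a_t(u,p):=\int_\hold \widetilde\sigma(t)\,D\Tt^{-\transp}\nabla u\cdot D\Tt^{-\transp}\nabla p\,\xi(t)$ and $\ell^a_t,\ell^b_t$ the transported right-hand sides of \eqref{E:var_form1}--\eqref{E:var_form2}. Since the bracketed state terms vanish whenever the transported state equations hold, $\mJ(\bfom_t)=G(t,u^t,v^t,p,q)$ for every admissible pair $(p,q)$.

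The adjoint states $\pa,\pb$ of \eqref{57a}--\eqref{57b} are chosen precisely so that $\partial_u G(0,\ua,\ub,\pa,\pb)=0$ and $\partial_v G(0,\ua,\ub,\pa,\pb)=0$; the sign difference between \eqref{57a} and \eqref{57b} reflects $\partial_u\tfrac12(u-v)^2=(u-v)$ versus $\partial_v\tfrac12(u-v)^2=-(u-v)$, while $\sigma_{\bfom}^\transp$ appears upon transposing $a_0$ in the test slot. The averaged adjoint method then yields $d\mJ(\bfom)(\bta)=\partial_t G(0,\ua,\ub,\pa,\pb)$, reducing everything to differentiating the explicit $t$-dependence of $G$.

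To evaluate $\partial_t G(0)$ I would use $\xi(0)=1$, $\xi'(0)=\divv\bta$, $\tfrac{d}{dt}D\Tt^{-\transp}|_{0}=-D\bta^\transp$, $\widetilde\sigma'(0)=\sum_k\chi_{\Om_k}D\sigma_k\bta$ and $\widetilde f'(0)=\sum_k\chi_{\Om_k}\nabla f_k\cdot\bta$, and sort the resulting terms according to whether they are paired with $D\bta$ or with $\bta$. The factors $\xi'(0)=\divv\bta=I_d:D\bta$ assemble the $I_d$-bracket of $S_1$ in \eqref{S1_1}; differentiating the two copies of $D\Tt^{-\transp}$ in $a_t$ produces $-\sigma_{\bfom}D\bta^\transp\nabla\ua\cdot\nabla\pa$ and $-\sigma_{\bfom}\nabla\ua\cdot D\bta^\transp\nabla\pa$, which via the algebraic identities $D\bta\,\bm{a}\cdot\bm{b}=(\bm{b}\otimes\bm{a}):D\bta$ and $\sigma D\bta^\transp\bm{a}\cdot\bm{b}=(\bm{a}\otimes\sigma^\transp\bm{b}):D\bta$ give the four tensor-product terms of \eqref{S1_1}; and $\widetilde\sigma'(0),\widetilde f'(0)$ generate $\Sb_0$ in \eqref{S0}, where $D\sigma_k\bta\,\nabla\ua\cdot\nabla\pa=D\sigma_k^\transp\nabla\ua\nabla\pa\cdot\bta$ is the third-order tensor transpose recalled before the statement. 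Adding the $(\ua,\pa)$ and $(\ub,\pb)$ contributions then produces \eqref{T:tensor_shape_deriv}.

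I expect the main difficulty to lie in justifying the identity $d\mJ(\bfom)(\bta)=\partial_t G(0)$ rigorously, i.e.\ in verifying the hypotheses of the averaged adjoint method in this low-regularity multiphase setting: one must check that $t\mapsto a_t$ is Lipschitz with uniformly coercive symmetric part --- which follows from the uniform positive-definiteness in Assumption \ref{assump1} and from $\Tt\to\mathrm{Id}$ in $\C^1$ as $t\to0$ --- and establish well-posedness of the transported states and of the averaged adjoint equation so that the material derivatives are controlled and drop out. The piecewise-$\C^1$ coefficients $\sigma_k,f_k$ across the nonsmooth interfaces and the non-symmetry of $\sigma_{\bfom}$, which forces the systematic use of $\sigma_{\bfom}^\transp$ and of the third-order tensor transpose, are the features requiring the most care, whereas the subsequent identification of $S_1$ and $\Sb_0$ is purely algebraic. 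Throughout I would restrict to fields $\bta$ for which the boundary data in $\ell^a_t,\ell^b_t$ is $t$-independent, so that no boundary term contaminates the volume formula \eqref{T:tensor_shape_deriv}.
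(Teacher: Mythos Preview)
Your proposal is correct and follows essentially the same route as the paper: both use the averaged adjoint method, pull the perturbed problem on $\bfom_t$ back to the fixed partition via $\Tt$, obtain the adjoints \eqref{57a}--\eqref{57b} from $\partial_{(u,v)}G(0)=0$, and identify $S_1,\Sb_0$ by exactly the derivative formulas $\xi'(0)=\divv\bta$, $(D\Tt^{-\transp})'(0)=-D\bta^\transp$, $\widetilde\sigma'(0)=\sum_k\chi_{\Om_k}D\sigma_k\bta$, $\widetilde f'(0)=\sum_k\chi_{\Om_k}\nabla f_k\cdot\bta$ and the tensor algebra you outline. The one technical device the paper makes explicit and you only allude to is the treatment of the non-homogeneous Dirichlet data: rather than working on the affine spaces $H^1_{a,h},H^1_{b,h}$, the paper first replaces \eqref{E:var_form1}--\eqref{E:var_form2} by equivalent formulations on the full $H^1(\hold)$ via added boundary terms $\int_{\Gamma_a}(\sigma_{\bfom}^\transp\nabla w)\cdot\bn\,(\xi-h)$ (and the analogue on $\Gamma_b$), so that the Lagrangian lives on a linear space and the boundary contributions are manifestly $t$-independent under $\Tt$.
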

\begin{proof}
We use the averaged adjoint method \cite{MR3374631} to compute the shape derivative of $\mJ(\bfom)$.
The  existence proof for the shape derivative of $\mJ(\bfom)$ goes in a similar way as in \cite{MR3535238}, where the isotropic case for two phases was treated.
Therefore, we only show the calculation here, and we refer to \cite{MR3535238} for the verification of the assumptions of  the averaged adjoint method.

First of all, in order to avoid working with  $H^{1}_{a,h}(\hold)$ and $H^{1}_{b,h}(\hold)$, we introduce  alternative variational formulations equivalent to \eqref{E:var_form1}-\eqref{E:var_form2}: find $\ua\in H^{1}(\hold)$ and $\ub\in H^{1}(\hold)$ solutions to
\begin{align}\label{E:var_form3}
\int_{\hold} \sigma_{\bfom} \nabla \ua \cdot \nabla \test &= \int_\hold f_{\bfom} \test 
+ \int_{\Gamma_b} g\test  
+  \int_{\Gamma_a}(\sigma_{\bfom}^\transp \nabla\test)\cdot\bn  (\ua - \meas) \quad \mbox{ for all }\test\in H^{1}_{a,0}(\hold), \\
\label{E:var_form4} \int_{\hold} \sigma_{\bfom} \nabla \ub \cdot \nabla \test 
&= \int_\hold f_{\bfom} \test + \int_{\Gamma_a} g\test  
+  \int_{\Gamma_b}(\sigma_{\bfom}^\transp \nabla\test)\cdot\bn  (\ub - \meas) \quad \mbox{ for all }\test\in H^{1}_{b,0}(\hold).
\end{align}
Note that the integrals on $\Gamma_a$ and $\Gamma_b$ in \eqref{E:var_form3}-\eqref{E:var_form4}  should be understood as dual products since $g$ and $(\sigma_{\bfom}^\transp \nabla\test)\cdot\bn$ belong to $H^{-1/2}(\Gamma)$. 
Compared to \eqref{E:var_form1}-\eqref{E:var_form2}, the additional terms in \eqref{E:var_form3}-\eqref{E:var_form4}  yield the non-homogeneous Dirichlet conditions $\ua=\meas$ on $\Gamma_a$ and $\ub=\meas$ on $\Gamma_b$.  

We define the Lagrangian $\mathcal{L}:\pd\times H^{1}(\hold)\times H^{1}(\hold)\times H^{1}_{a,0}(\hold) \times H^{1}_{b,0}(\hold)$ as
\begin{align*}
\mathcal{L}(\bfom,(\xa,\xb),(\ya,\yb)) &:= \frac{1}{2}\int_{\hold} (\xa -  \xb)^2 
+ \int_{\hold} \sigma_{\bfom} \nabla \xa \cdot \nabla \ya - \int_\hold f_{\bfom} \ya 
- \int_{\Gamma_b} g\ya 
- \int_{\Gamma_a}(\sigma_{\bfom}^\transp \nabla\ya)\cdot\bn  (\xa - \meas)\\
&\quad + \int_{\hold} \sigma_{\bfom} \nabla \xb \cdot \nabla \yb - \int_\hold f_{\bfom} \yb 
- \int_{\Gamma_a} g\yb
- \int_{\Gamma_b}(\sigma_{\bfom}^\transp \nabla\yb)\cdot\bn  (\xb - \meas). 
\end{align*}
Then,  adjoints $\pa\in H^1_{a,0}(\hold)$ and $\pb\in H^1_{b,0}(\hold)$ are solutions of (see \cite{MR3535238})
\begin{align*}
\partial_{(\xa,\xb)}\mathcal{L}(\bfom,(\ua,\ub),(\pa,\pb))(\hat\xa,\hat\xb) = 0\quad \forall (\hat\xa,\hat\xb) \in H^1(\hold)\times H^1(\hold).
\end{align*}
This yields
\begin{align}
\label{58a}\int_\hold \sigma_{\bfom}^\transp \nabla \pa \cdot \nabla \hat\xa &= -\int_{\hold} (\ua - \ub)\hat\xa  +  \int_{\Gamma_a}(\sigma_{\bfom}^\transp \nabla\pa)\cdot\bn    \hat\xa\quad \mbox{ for all } \hat\xa\in H^1(\hold),\\
\label{58b}\int_\hold \sigma_{\bfom}^\transp \nabla \pb \cdot \nabla \hat\xb &= \int_{\hold} (\ua - \ub)\hat\xb  +  \int_{\Gamma_b}(\sigma_{\bfom}^\transp \nabla\pb)\cdot\bn  \hat\xb\quad \mbox{ for all } \hat\xb\in H^1(\hold).
\end{align}
Taking test functions  $\hat\xa\in  H^1_{a,0}(\hold)\subset H^1(\hold)$ and $\hat\xb\in H^1_{b,0}(\hold)\subset H^1(\hold)$ in \eqref{58a}-\eqref{58b}, we get
\eqref{57a}-\eqref{57b}.

Following the averaged adjoint method \cite{MR3535238},  we introduce the shape-Lagrangian $G$ using a reparameterization of $\mathcal{L}$:
\begin{align*}
& G(t,(\xa,\xb),(\ya,\yb)) := \mathcal{L}(\bfom_t,(\xa,\xb)\circ\Tt^{-1},(\ya,\yb)\circ\Tt^{-1})  \\
& = \frac{1}{2}\int_{\hold} (\xa^t -  \xb^t)^2 
+ \int_{\hold} \sigma_{\bfom_t} D\Tt^{-\transp}\circ\Tt^{-1}(\nabla \xa)\circ\Tt^{-1} \cdot D\Tt^{-\transp}\circ\Tt^{-1}(\nabla \ya)\circ\Tt^{-1} \\
&\quad - \int_\hold f_{\bfom_t} \ya^t - \int_{\Gamma_b} g\ya 
- \int_{\Gamma_a}(\sigma_{\bfom}^\transp \nabla\ya)\cdot\bn  (\xa - \meas)\\
&\quad + \int_{\hold} \sigma_{\bfom_t} D\Tt^{-\transp}\circ\Tt^{-1}(\nabla \xb)\circ\Tt^{-1} \cdot D\Tt^{-\transp}\circ\Tt^{-1}(\nabla \yb)\circ\Tt^{-1}\\
&\quad - \int_\hold f_{\bfom_t} \yb^t - \int_{\Gamma_a} g\yb
-  \int_{\Gamma_b}(\sigma_{\bfom}^\transp \nabla\yb)\cdot\bn  (\xb - \meas), 
\end{align*} 
with the notation $\xa^t := \xi \circ\Tt^{-1}$ and the similar notations for the other functions involved. 
Note that we have used $\Tt = \text{id}$ on $\partial\hold$, where $\text{id}$ denotes the identity, due to $\bta\in\C^1_{\partial\hold}(\overline{\hold},\R^d)$.
Proceeding with the change of variables $\bx\mapsto\Tt(\bx)$ inside the integrals and using again $\Tt = \text{id}$ on $\partial\hold$, we get
\begin{align*}
G(t,(\xa,\xb),(\ya,\yb)) 
& = \frac{1}{2}\int_{\hold} (\xa -  \xb)^2 \det(\Tt)
+ \int_{\hold} \mathds{M}(t)\nabla \xa \cdot \nabla \ya\\
&- \int_\hold \widetilde f(t) \ya \det(\Tt) 
- \int_{\Gamma_b} g\ya 
- \int_{\Gamma_a}(\sigma_{\bfom}^\transp \nabla\ya)\cdot\bn  (\xa - \meas)\\
& + \int_{\hold} \mathds{M}(t)\nabla \xb \cdot \nabla \yb
- \int_\hold \widetilde f(t) \yb \det(\Tt) 
- \int_{\Gamma_a} g\yb
-  \int_{\Gamma_b}(\sigma_{\bfom}^\transp \nabla\yb)\cdot\bn  (\xb - \meas),
\end{align*} 
where $\mathds{M}(t) := \det(\Tt) D\Tt^{-1}\widetilde\sigma(t) D\Tt^{-\transp}$, 
$\widetilde\sigma(t): =  \sum_{k\in\K} \sigma_k\circ\Tt \chi_{\Om_k}$, $\widetilde f(t): =  \sum_{k\in\K} f_k\circ\Tt \chi_{\Om_k}$.
We compute the derivatives at $t=0$:
\begin{align*}
\widetilde f'(0) & =  \sum_{k\in\K} \chi_{\Om_k}\nabla f_k \cdot\bta ,\\
\widetilde\sigma'(0) & = \sum_{k\in\K} \chi_{\Om_k} D\sigma_k\bta , \\
\mathds{M}'(0) & = \divv(\bta) \sigma_{\bfom} - D\bta \sigma_{\bfom} - \sigma_{\bfom} D\bta^\transp + \widetilde\sigma'(0). 
\end{align*}
Note that  $D\sigma_k:\hold\to \R^{d\times d\times d}$ is a third-order tensor since $\sigma_k$ are matrix-valued functions, and $D\sigma_k\bta$ is matrix-valued.
This yields 
\begin{align*}
d\mJ(\bfom)(\bta)  & = \partial_t G(0,(\ua,\ub),(\pa,\pb))\\
& = \frac{1}{2}\int_{\hold} (\ua -  \ub)^2 \divv(\bta)
+ \int_{\hold} \mathds{M}'(0)\nabla \ua \cdot \nabla \pa
- \int_\hold \widetilde f'(0) \pa  + f_{\bfom} \pa \divv(\bta) \\
&\quad + \int_{\hold} \mathds{M}'(0)\nabla \ub \cdot \nabla \pb
- \int_\hold \widetilde f'(0) \pb  + f_{\bfom} \pb \divv(\bta).
\end{align*}
Using tensor calculus we compute
\begin{align*}
\mathds{M}'(0)\nabla \ua \cdot \nabla \pa & =  \divv(\bta) \sigma_{\bfom}\nabla \ua \cdot \nabla \pa 
- D\bta \sigma_{\bfom}\nabla \ua \cdot \nabla \pa 
- \sigma_{\bfom} D\bta^\transp\nabla \ua \cdot \nabla \pa 
+ \widetilde\sigma'(0)\nabla \ua \cdot \nabla \pa \\
& =  (\sigma_{\bfom}\nabla \ua \cdot \nabla \pa)   I_d : D\bta
-  D\bta : (\nabla \pa \otimes \sigma_{\bfom}\nabla \ua )
-  D\bta : (\nabla \ua\otimes \sigma_{\bfom}^\transp\nabla \pa)\\
&\quad + \sum_{k\in\K} \chi_{\Om_k} D\sigma_k\bta \nabla \ua \cdot \nabla \pa\\
& =  D\bta :[(\sigma_{\bfom}\nabla \ua \cdot \nabla \pa)   I_d 
-  \nabla \pa \otimes \sigma_{\bfom}\nabla \ua 
- \nabla \ua\otimes \sigma_{\bfom}^\transp\nabla \pa]
 + \sum_{k\in\K} \chi_{\Om_k} D\sigma_k^\transp \nabla \ua \nabla \pa  \cdot \bta,
\end{align*}
where $D\sigma_k^\transp $ denotes the transpose of the third-order tensor $D\sigma_k$.
The other terms of $d\mJ(\bfom)(\bta)$ can be rearranged in a similar way to obtain \eqref{T:tensor_shape_deriv}.
\end{proof}
\begin{figure}
\begin{center}
\definecolor{preto}{RGB}{0,0,0}
\definecolor{vermelho}{RGB}{254,49,49}
\definecolor{azul_escuro}{RGB}{0,0,255}
\definecolor{cinza}{RGB}{195,195,195}
\definecolor{papel_amarelo}{RGB}{255, 249, 240}

\begin{tikzpicture}[scale=5]
\fill[papel_amarelo, opacity=0.4] (0,0) rectangle (1,1);
\draw[line width=0.7mm, preto](0,1) -- (1,1);
\draw[line width=0.7mm, vermelho](0,0) -- (0,1);	
\draw[line width=0.7mm, preto](0,0) -- (1,0);
\draw[line width=0.7mm, vermelho](1,0) -- (1,1);
\draw[line width=0.5mm, preto] (0.35,0.65) ellipse (0.2cm and 0.1cm) node[anchor= center] {$\Omega_1,\sigma_1$};
\fill[vermelho, opacity=0.2] (0.35,0.65) ellipse (0.2cm and 0.1cm);
\draw[line width=0.5mm, preto] (0.75,0.35) ellipse (0.15cm and 0.25cm) node[anchor= center] {$\Omega_2,\sigma_2$};
\fill[vermelho, opacity=0.4] (0.75,0.35) ellipse (0.15cm and 0.25cm);
\draw (0.35, 0.35) node[anchor= center] {$\Om_0,\sigma_0$};
\def\raio{0.015}
\def\transparenc{0.7}	
\def\h{0.1};
\draw (0.5, -0.08) node[anchor= center] {$\Gamma_b$};
\draw (0.5, 1.08) node[anchor= center] {$\Gamma_b$};
\draw[vermelho] (1.1, 0.5) node[anchor= center] {$\Gamma_a$};
\draw[vermelho] (-0.1, 0.5) node[anchor= center] {$\Gamma_a$};
\end{tikzpicture}
\end{center}
\caption{Example of partition  $\overline{\hold} = \overline{\Omega_0}\cup\overline{\Omega_1}\cup \overline{\Omega_2}$ and boundaries $\Gamma_a = \Gamma_{\rm{left}} \cup \Gamma_{\rm{right}}$, $\Gamma_b = \Gamma_{\rm{lower}}\cup \Gamma_{\rm{upper}}$. }\label{partition}
\end{figure}
\begin{remark}
Formula \eqref{T:tensor_shape_deriv} generalizes \cite[Proposition 6.2]{MR3535238} in the case $\alpha_1 = 1$ and $\alpha_2 = 0$.
The result of \cite[Proposition 6.2]{MR3535238} can be recovered by taking two phases ($\K = \{0,1\}$) and 
$$\sigma_{\bfom} = \sigma_0\chi_{\Om_0} + \sigma_1\chi_{\Om_1},$$
where $\sigma_0,\sigma_1$ are multiples of the identity matrix.
\end{remark}

\subsection{Algorithm and numerical results}\label{sec:algo}
Without loss of generality, we take $\lsf_0 \equiv 0$ in the numerics.
The phases $\Om_k(\blsf(t))$ are defined as in \eqref{def:omk}. 
We consider the particular case $\ck=3$, $\K =\{0,1,2\}$, $d=2$ which corresponds to three phases in two dimensions.  
We choose  $\hold=(0,1)\times(0,1)$, $f_{\bfom}\equiv 0$ and $\sigma_{\bfom} = \sigma_0\chi_{\Om_0} + \sigma_2\chi_{\Om_1} + \sigma_2\chi_{\Om_2}$, where $\sigma_k = \sigma_{k,0}I_d$ for $k\in\K$, and $\sigma_{k,0}$ are known scalar values.
This corresponds to the isotropic EIT case, and \eqref{S1_1}-\eqref{S0} become in this case
\begin{align*}
S_1(\bfom) & = \left[\frac{1}{2}(\ua-\ub)^2 
+  \sigma_{\bfom}\nabla \ua\cdot\nabla \pa + \sigma_{\bfom}\nabla \ub\cdot\nabla \pb 
\right]  I_d
- 2\sigma_{\bfom} [\nabla \ua\odot \nabla \pa 
+ \nabla \ub\odot \nabla \pb],\\
\Sb_0(\bfom) & \equiv  0,  
\end{align*}
where $\nabla \ua\odot \nabla \pa := (\nabla \ua\otimes \nabla \pa+\nabla \pa\otimes \nabla \ua)/2$.

We use the software package FEniCS for the implementation; see \cite{fenics:book}.
The square $\hold$ is discretized using a triangular mesh with $255$ cells in both directions.
In our numerical experiments, we choose $\sigma_{0,0} = 1$, $\sigma_{1,0} = 3$, $\sigma_{2,0} = 15$.  
We choose  $\Gamma_a = \Gamma_{\rm{left}} \cup \Gamma_{\rm{right}}$ with 
$\Gamma_{\rm{left}} = \{0\}\times [0,1]$, $\Gamma_{\rm{right}}= \{1\}\times [0,1]$ and  $\Gamma_b = \Gamma_{\rm{lower}}\cup \Gamma_{\rm{upper}}$ with $\Gamma_{\rm{lower}} = [0,1]\times \{0\}$,  $\Gamma_{\rm{upper}} = [0,1]\times \{1\}$; see Figure \ref{partition}.
We also normalize the cost function \eqref{eit3.2} and the associated shape derivative by dividing them by the constant $\frac{1}{2}\int_{\hold} (\ua_1^{(0)} -  \ub_1^{(0)})^2$, where $\ua_1^{(0)}$,$\ub_1^{(0)}$ represent $\ua_1$ and $\ub_1$ computed for the initial partition $\bfom^{(0)}$.

Synthetic measurements $h_i$ are obtained by taking the trace on $\partial\hold$ of the solution
of \eqref{eq:eit} using the ground truth partition $\bfom^\star$,  $f_{\bfom^\star}\equiv 0$ and  currents $g_i$, $i = 1, \dots, m$. 
To simulate noisy EIT data, each measurement $h_i$
is corrupted by adding a normal Gaussian noise with mean zero and standard deviation $\delta \|h_i\|_{\infty}$, where $\delta$ is
a parameter. 
The noise level is computed as
\begin{align}
noise =& \frac{\sum_{i=1}^m  \|h_i - \tilde{h}_i\|_{L^2(\partial\hold)}}{\sum_{i=1}^m  \|h_i\|_{L^2(\partial\hold)}},
\end{align}
where $h_i$ and $\tilde{h}_i$ are respectively the noiseless and noisy measurements corresponding to the current $g_i$.

In the numerical tests, we use $I = 11$ measurements and define the currents in the following way:
\begin{align*}
g_1 &= 1\mbox{ on }\Gamma_{\rm{left}}\cup\Gamma_{\rm{right}} \mbox{ and } g_1 = -1\mbox{ on }\Gamma_{\rm{upper}}\cup\Gamma_{\rm{lower}}, \\ 
g_2 &= 1\mbox{ on }\Gamma_{\rm{left}}\cup\Gamma_{\rm{upper}} \mbox{ and } g_2 = -1\mbox{ on }\Gamma_{\rm{right}}\cup\Gamma_{\rm{lower}},\\
g_3 &= 1\mbox{ on }\Gamma_{\rm{left}}\cup\Gamma_{\rm{lower}} \mbox{ and } g_3 = -1\mbox{ on }\Gamma_{\rm{right}}\cup\Gamma_{\rm{upper}}.
\end{align*}
Then we choose 
\begin{align*}
g_4 &= \arctan(500(x_2 - 0.5))\mbox{ on }\Gamma_{\rm{left}} \mbox{ and } g_4=0 \mbox{ otherwise}, 
\end{align*}
which is used as an approximation of the function
\begin{align*}
g &= \frac{\pi}{2}\mbox{ on }\Gamma_{\rm{left}}\cap \{x_2 > 0.5\},\ g = -\frac{\pi}{2}\mbox{ on }\Gamma_{\rm{left}}\cap\{x_2 \leq 0.5\} \mbox{ and } g_4=0 \mbox{ otherwise}, 
\end{align*}
and $g_5,g_6,g_7$ are defined similarly as $g_4$ on $\Gamma_{\rm{right}}$, $\Gamma_{\rm{upper}}$, $\Gamma_{\rm{lower}}$, respectively. 
Then
\begin{align*}
g_8 &= \sin(4 \pi x_2)\mbox{ on }\Gamma_{\rm{left}}  \mbox{ and } g_8=0 \mbox{ otherwise},
\end{align*}
and $g_9,g_{10},g_{11}$ are defined in a similar way on $\Gamma_{\rm{right}}$, $\Gamma_{\rm{upper}}$, $\Gamma_{\rm{lower}}$, respectively.

In order to obtain a descent direction we solve  
\begin{equation}
\label{VP_2}
\mathcal{B}(\bta,\bxi) :=\int_\hold \alpha_1 D\bta : D\bxi +  \alpha_2 \bta\cdot \bxi + \int_{\partial\hold}\alpha_3 \bta\cdot \bxi =- d\mJ(\bfom)(\bxi) \mbox{  for all  } \bxi\in H^1(\hold)^2,
\end{equation}
with $\alpha_1 >0,\alpha_2 >0$ and $\alpha_3 >0$. 
For $d\mJ(\bfom)$ one can in principle either use the distributed expression \eqref{eq:first_order_tensor} or the boundary expression \eqref{158}, but we use the 
distributed expression \eqref{eq:first_order_tensor} which is convenient for implementation.
The solution $\bta$ of \eqref{VP_2} is defined on all of $\hold$ and is a descent direction since $\dJ(\bfom)(\bta) = -\mathcal{B}(\bta,\bta)< 0$ if $\bta\neq 0$.
In our experiment we used $\alpha_1 =0.2, \alpha_2 =0.8$ and $\alpha_3 =10^5$.
The role of the large coefficient $\alpha_3$ is to provide a relaxation of the Dirichlet boundary condition $\bta\in\C^1_{\partial\hold}(\overline{\hold},\R^d)$ so that slow tangential displacements can occur on $\partial\hold$, which allows to consider discontinuities of the conductivity up to the boundary $\partial\hold$.

We define a relative error measure for the reconstruction as (note that $E(\bfom)$ is a percentage)
$$ E(\bfom) := 100\times \frac{\displaystyle\int_\hold |\sigma_{\bfom} - \sigma_{\bfom^\star}|}{\displaystyle\int_\hold |\sigma_{\bfom}|}.$$

Numerical results are shown in Figures \ref{fig4} and \ref{fig5}. 
The ground truth conductivity $\sigma(\bfom^\star)$ is composed of 
a background with two low conductivity phases $\sigma_{0,0} = 1$ and $\sigma_{1,0} = 3$, separated by a curvy horizontal interface, and of
two inclusions of different sizes and higher conductivity $\sigma_{2,0} = 15$   (see the ground truth $\sigma(\bfom^\star)$  in Figure \ref{fig4}).
The goal is to reconstruct the shapes of the two conductivities and the location of the interface between the two low conductivity phases.
As can be seen in Figures \ref{fig4} and \ref{fig5}, the shapes of the two inclusions are well-reconstructed albeit slightly smoothed.
The interface between the two weak phases is well-reconstructed in the regions closer to the boundary, and less so in the center, as expected for this type of inverse problem. 

In Figure \ref{fig5} the sensitivity of the reconstruction with respect to noise is investigated. 
Numerical results corresponding to three different noise levels are compared. 
In the three cases, the reconstruction is able to capture the main geometric features of the ground truth.
The relative errors at the final iteration  corresponding to the noise levels $0\%,1.02\%$ and $2.03\%$ are given by $E(\bfom^{\text{rec}}) =5.72\%, 6.19\%$ and $6.81\%$, respectively, thus showing  that the method is robust with respect to noise.\\

\begin{figure}
\includegraphics[width=0.32\textwidth]{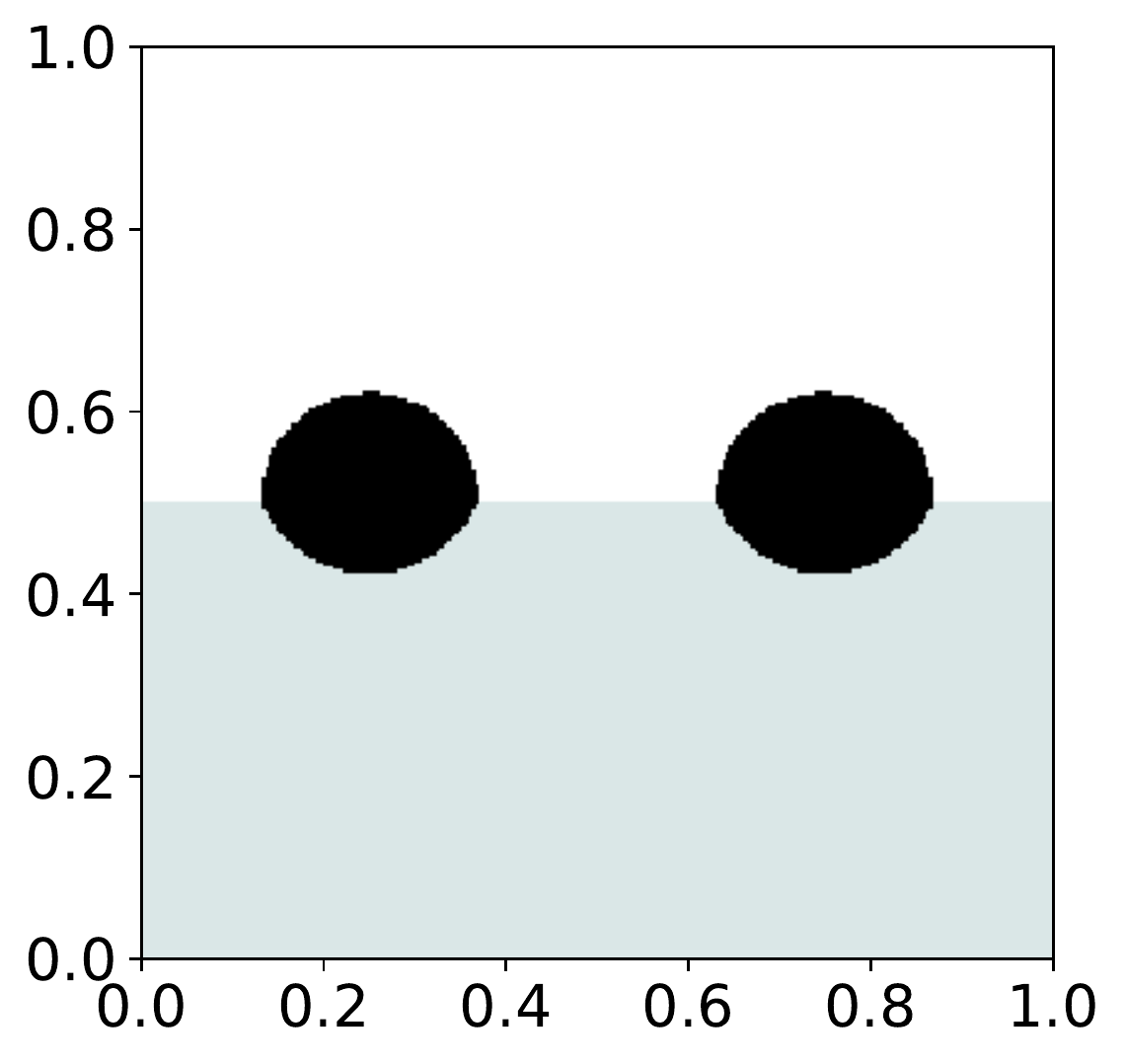}
\includegraphics[width=0.32\textwidth]{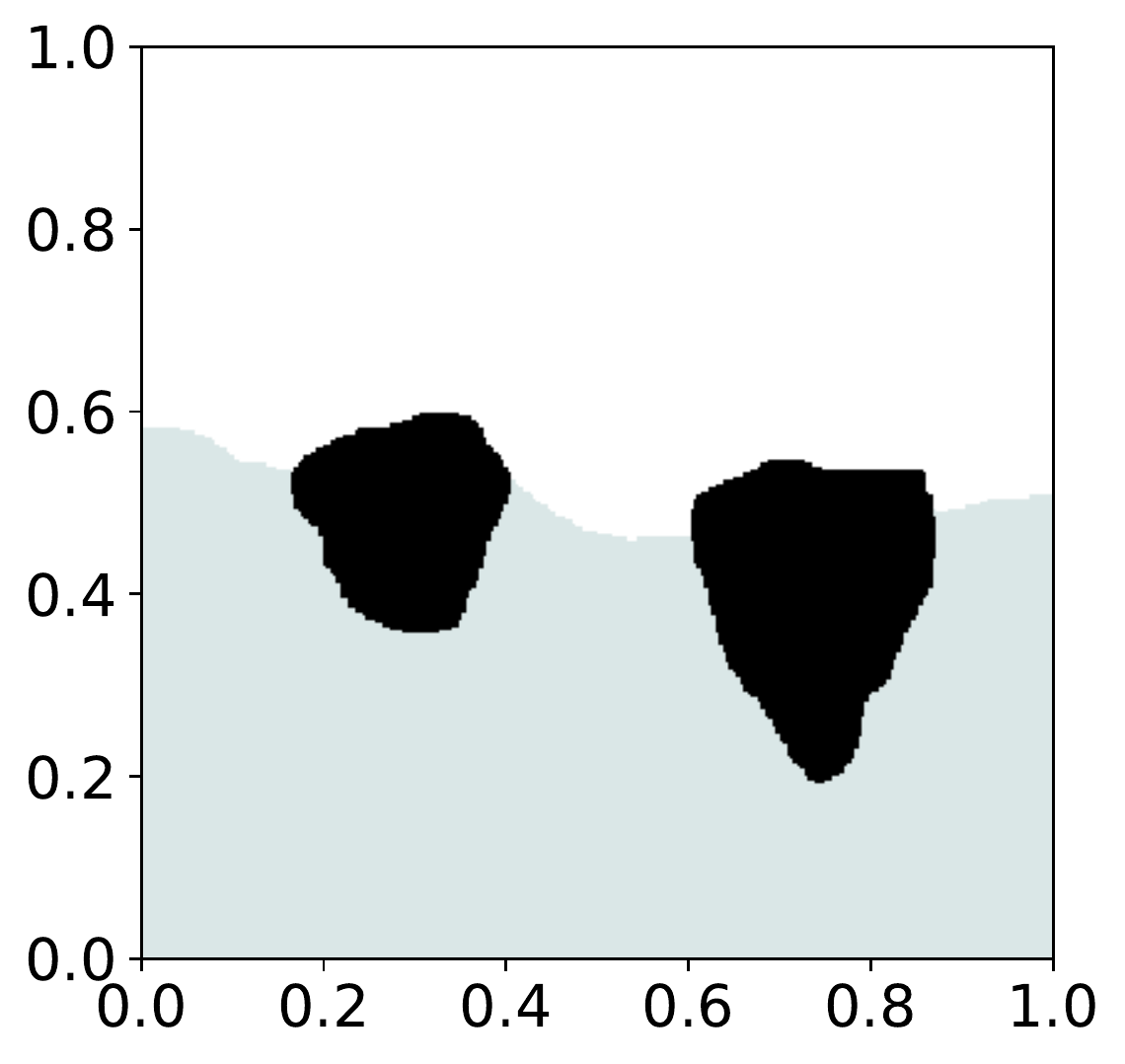}
\includegraphics[width=0.32\textwidth]{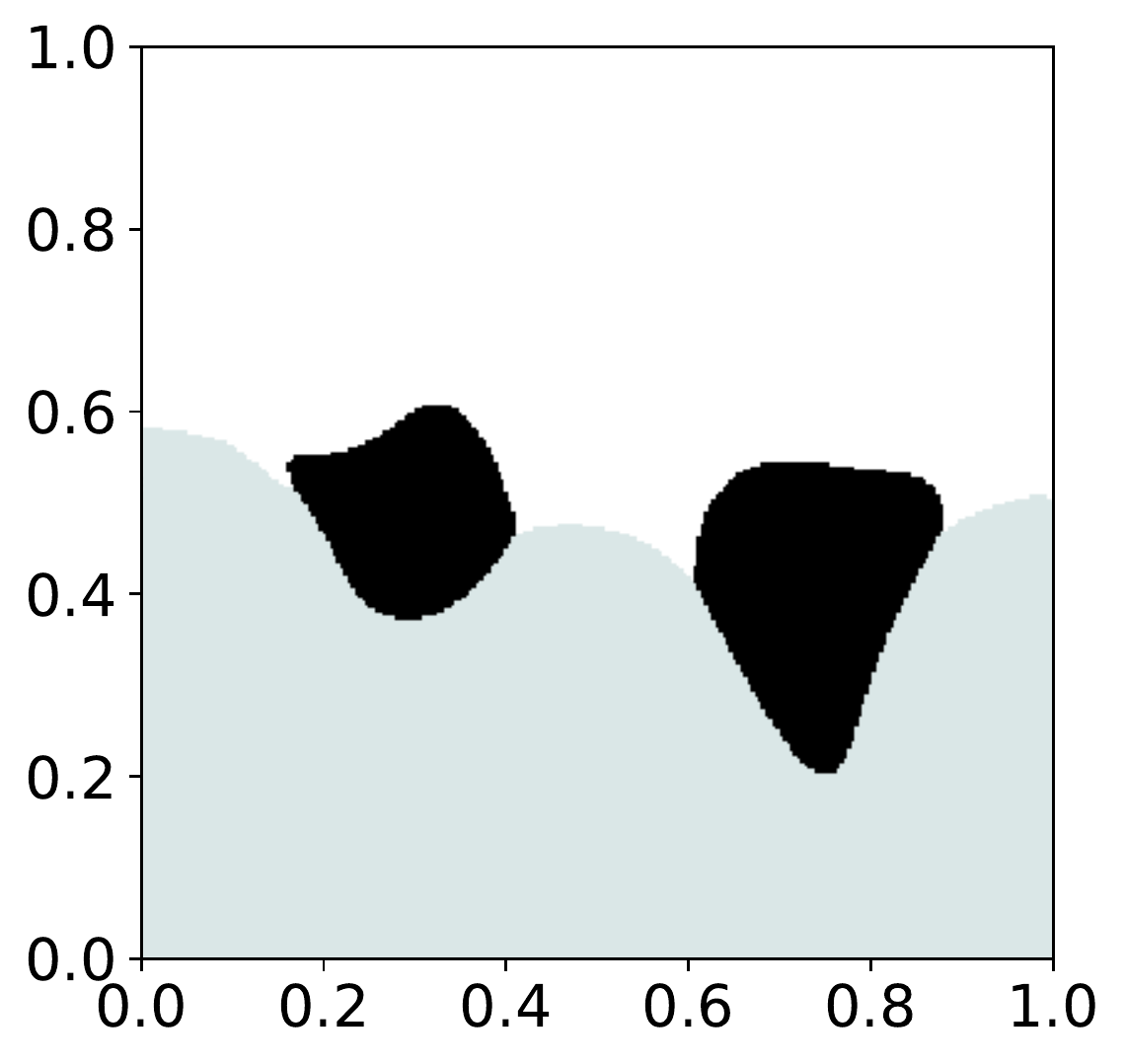}
\caption{Initialization (left), ground truth $\sigma(\bfom^\star)$ (center) and reconstruction $\sigma(\bfom^{\text{rec}})$ using $11$ boundary currents with $0\%$ noise  and $E(\bfom^{\text{rec}}) = 5.72\%$ reconstruction error  (right). The conductivity values are  $\sigma_{0,0} = 1$ (white), $\sigma_{1,0} = 3$ (light gray), $\sigma_{2,0} = 15$ (black).}\label{fig4}
\end{figure}
\begin{figure}
\includegraphics[width=0.45\textwidth]{true_conductivity.pdf}
\includegraphics[width=0.45\textwidth]{reconstruction.pdf}\\
\includegraphics[width=0.45\textwidth]{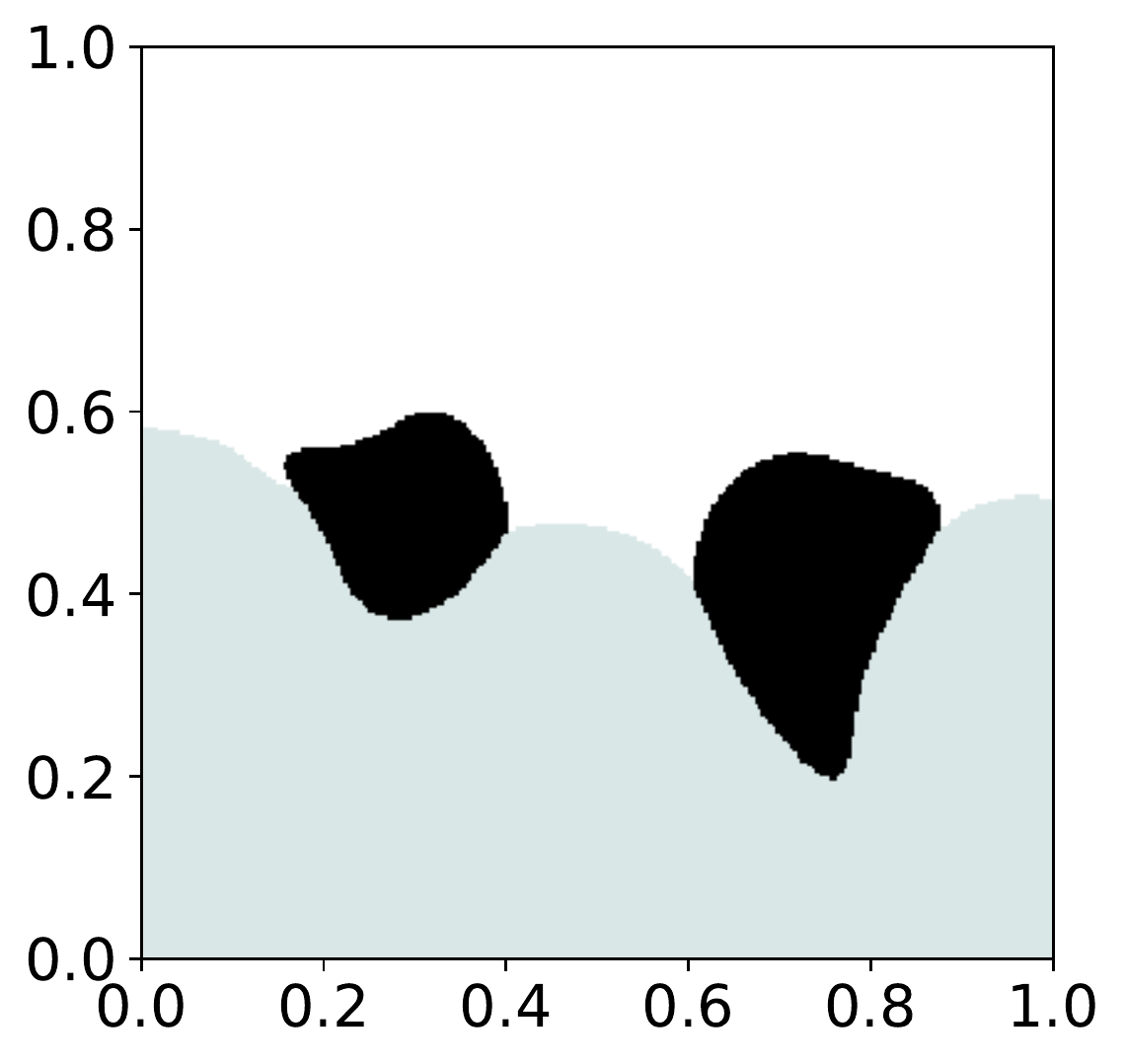}
\includegraphics[width=0.45\textwidth]{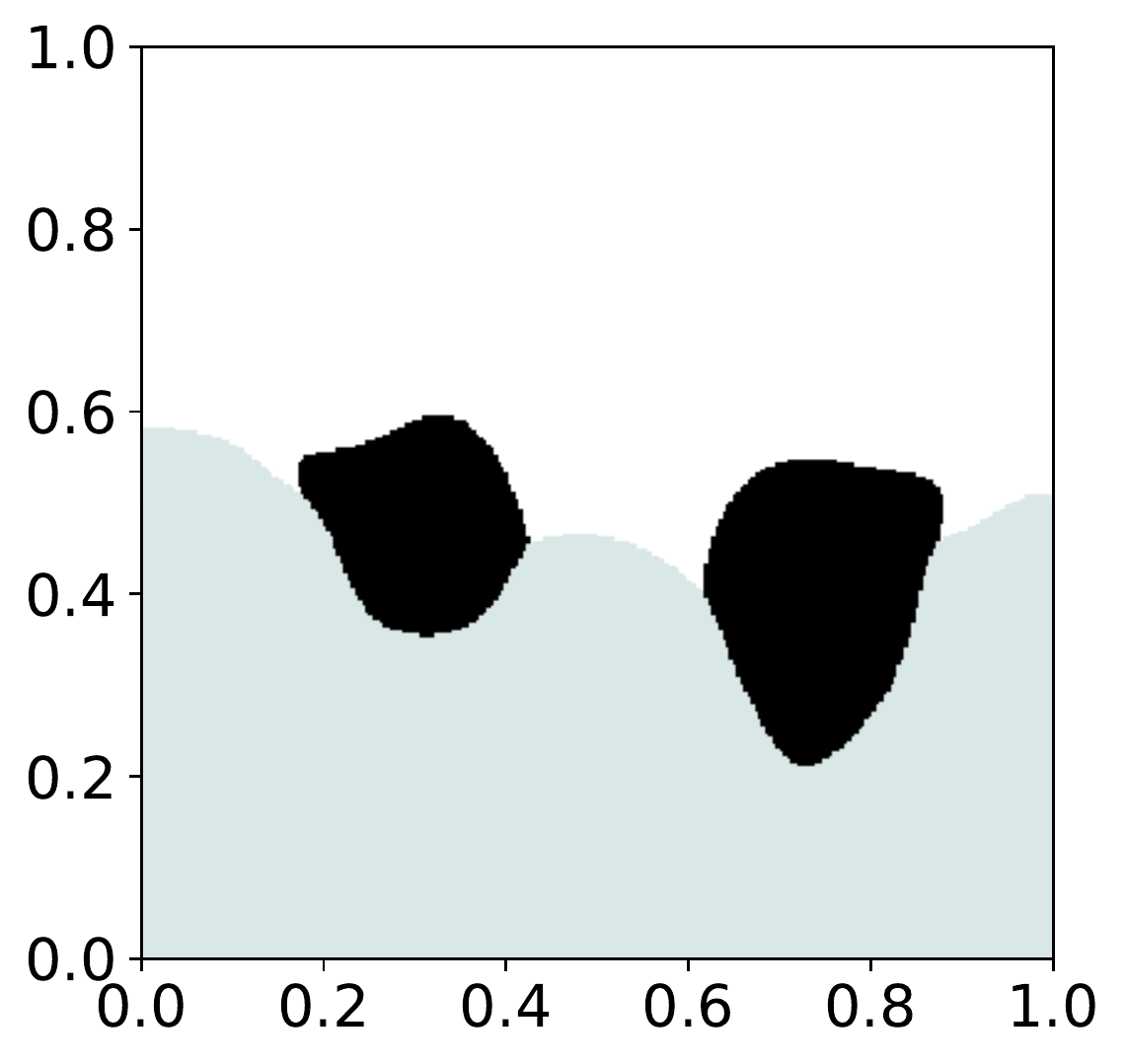}
\caption{Ground truth $\sigma(\bfom^\star)$ (top left), reconstructions  $\sigma(\bfom^{\text{rec}})$ using $11$ boundary currents with $0\%$ noise  and $E(\bfom^{\text{rec}}) =5.72\%$ error (top right), with $1.02\%$ noise  and $E(\bfom^{\text{rec}}) = 6.19\%$ error (bottom left), with $2.03\%$ noise and $E(\bfom^{\text{rec}}) =6.81\%$ error  (bottom right). The conductivity values are  $\sigma_{0,0} = 1$ (white), $\sigma_{1,0} = 3$ (light gray), $\sigma_{2,0} = 15$ (black). The initialization is shown in Figure \ref{fig4}(left).}\label{fig5}
\end{figure}

\subsection*{ Acknowledgments.} The author would like to thank Prof. James A. Sethian for the discussion about the tracking of interfaces in multiphase problems and  the inspiration to work on this topic. 

\subsection*{Funding}
The author gratefully acknowledges the support  of the Brazilian National Council for Scientific and Technological Development  (Conselho Nacional de Desenvolvimento Cient\'ifico e Tecnol\'ogico - CNPq) through the process: 408175/2018-4 ``Otimiza\c{c}\~ao de forma n\~ao suave e controle de problemas de fronteira livre'', and through the program  ``Bolsa de Produtividade em Pesquisa - PQ 2018'', process: 304258/2018-0.

\bibliographystyle{abbrv}
\bibliography{LEM}
\end{document}